\documentclass{amsart}

\usepackage{amsfonts,amssymb,amsmath,amsthm}
\usepackage{tikz}
\usepackage{hyperref}

\newcommand{\kk}{\mathbb{K}}
\newcommand{\N}{\mathbb{N}}
\newcommand{\ZZ}{\mathbb{Z}}
\newcommand{\walpha}{\widehat{\alpha}}

\newcommand{\R}{\mathbb{R}}
\newcommand{\m}{\mathfrak{m}}

\DeclareMathOperator{\Ass}{Ass}
\DeclareMathOperator{\NP}{NP}
\DeclareMathOperator{\SP}{SP}

\DeclareMathOperator{\reg}{\mbox{reg}}
\DeclareMathOperator{\supp}{supp}
\DeclareMathOperator{\link}{link}
\DeclareMathOperator{\STAR}{star}

\newtheorem{thm}{Theorem}[section]

\newtheorem{question}[thm]{Question}
\newtheorem{proposition}[thm]{Proposition}
\newtheorem{corollary}[thm]{Corollary}
\newtheorem{theorem}[thm]{Theorem}
\newtheorem{lemma}[thm]{Lemma}

\theoremstyle{definition}
\newtheorem{definition}[thm]{Definition}

\newtheorem{example}[thm]{Example}
\newtheorem{notation}[thm]{Notation}
\newtheorem{remark}[thm]{Remark}

\title{Asymptotic resurgence via integral closures}

\author[M.~DiPasquale]{Michael DiPasquale}     
\address{Michael DiPasquale\\     
	Department of Mathematics\\     
	Oklahoma State University\\     
	Stillwater\\
	OK \ 74078-1058\\     
	USA}     
\email{Michael.DiPasquale@colostate.edu}
\urladdr{\url{https://midipasq.github.io/}}   
\author[C.A.~Francisco]{Christopher A. Francisco}
\address{Christopher A. Francisco\\
	Department of Mathematics\\     
	Oklahoma State University\\     
	Stillwater\\
	OK \ 74078-1058\\     
	USA}    
\email{chris.francisco@okstate.edu}
\urladdr{\url{https://math.okstate.edu/people/chris/}}
\author[J.~Mermin]{Jeffrey Mermin}
\address{Jeffrey Mermin\\     
	Department of Mathematics\\     
	Oklahoma State University\\     
	Stillwater\\
	OK \ 74078-1058\\     
	USA}
\email{mermin@math.okstate.edu}     
\urladdr{\url{https://math.okstate.edu/people/mermin/}}   
\author[J.~Schweig]{Jay Schweig}
\address{Jay Schweig\\
	Department of Mathematics\\     
	Oklahoma State University\\     
	Stillwater\\
	OK \ 74078-1058\\     
	USA}
\email{jay.schweig@okstate.edu}
\urladdr{\url{https://math.okstate.edu/people/jayjs/}}

\begin{document}

\begin{abstract}
Given an ideal in a polynomial ring, we show that the asymptotic resurgence studied by Guardo, Harbourne, and Van Tuyl can be computed using integral closures.  As a consequence, the asymptotic resurgence of an ideal is the maximum of finitely many ratios involving Waldschmidt-like constants (which we call \textit{skew} Waldschmidt constants) defined in terms of Rees valuations.  We use this to prove that the asymptotic resurgence coincides with the resurgence if the ideal is normal (that is, all its powers are integrally closed).

For a monomial ideal the skew Waldschmidt constants have an interpretation involving the symbolic polyhedron defined by Cooper, Embree, H\`a, and Hoefel.  Using this intuition we provide several examples of squarefree monomial ideals whose resurgence and asymptotic resurgence are different. 
\end{abstract}

\maketitle

\section{Introduction}\label{s:intro}
If $I$ is a homogeneous ideal in the polynomial ring $S= \kk[x_1,\ldots,x_n]$, there are different notions of taking powers of $I$.  The ordinary power $I^r$ is algebraically the most obvious choice, but the geometric information it contains can be quite difficult to understand (there may be many embedded primes).  On the other hand, the symbolic power $I^{(s)}=\cap_{P\in\Ass(I)} (I^sS_P\cap S)$ is geometrically more natural; for instance (if $I$ is radical) the Zariski-Nagata theorem shows that $I^{(s)}$ consists of polynomials vanishing to order $s$ along the projective variety defined by $I$.  However, the algebraic properties of $I^{(s)}$ are much more opaque.  It may be quite difficult even to determine the minimum degree of a polynomial in $I^{(s)}$ (denoted $\alpha(I^{(s)})$).  For example, a famous unresolved conjecture of Nagata~\cite{N61} asserts that if $I\subset\kk[x,y,z]$ is the ideal of $r>9$ very general points in the projective plane, then $\alpha(I^{(s)})>s\sqrt{r}$.

There is a large body of research comparing regular and symbolic powers -- we refer the reader to~\cite{DDSGHNB18} for a recent survey.  Our focus in this paper is on the \textit{containment problem}; this is the study of the set of pairs $(r,s)$ of integers so that $I^{(s)}\subset I^r$.  The containment problem has attracted a great deal of attention since the pioneering work of Swanson~\cite{S00} which led to the seminal papers of Ein, Lazarsfeld, and Smith~\cite{ELS01} and Hochster and Huneke~\cite{HH02} (more on this below).  A few of the many papers written on this problem in the last ten years are~\cite{BDHKKSS09,BH10Resurgence,DST13,GHV13,BCH14,HS15,DHNSST15,CEHH17}.

For the containment problem to be interesting, one should know that for a fixed $r$, $I^{(s)}\subset I^r$ for all $s\gg 0$.  Even more is true; a groundbreaking result of Swanson~\cite{S00} is that $I^{(hr)}\subset I^r$ for some constant $h$ (\emph{a priori} dependent on $I$).  This pioneering work led to the celebrated result, due to Ein, Lazarsfeld, and Smith~\cite{ELS01} and Hochster and Huneke~\cite{HH02}, that $I^{(hr)}\subset I^r$ for an ideal $I$ with big height $h$ in a regular Noetherian ring (such as the polynomial ring $S$).

To capture how large an $h$ is necessary in the containment $I^{(hr)}\subset I^r$, Bocci and Harbourne define the resurgence of $I$ in~\cite{BH10} as
\[
\rho(I)=\sup\left\lbrace\frac{s}{r}:I^{(s)}\not\subset I^r\right\rbrace.
\]
By~\cite{ELS01,HH02}, $\rho(I)\le n-1$.  In~\cite{GHV13}, Guardo, Harbourne, and Van Tuyl introduce the asymptotic resurgence of $I$,
\[
\rho_a(I)=\sup\left\lbrace \frac{s}{r}:I^{(st)}\not\subset I^{rt} \mbox{ for all }t\gg 0\right\rbrace,
\]
and study it for smooth projective schemes.  Although there is an evident inequality $\rho_a(I)\le \rho(I)$, there are several known examples where $\rho_a(I)\neq \rho(I)$ (see~\cite{DHNSST15} and the recent preprint~\cite{BDHHSS17}).

Since $\rho(I)$ and $\rho_a(I)$ are difficult to compute, it has been useful to study lower bounds.  One important lower bound on the (asymptotic) resurgence, introduced in~\cite{BH10}, is defined in terms of the \textit{Waldschmidt constant} $\walpha(I)$, defined as
\[
\walpha(I)=\lim_{s\to\infty}\frac{\alpha(I^{(s)})}{s}.
\]
Bocci and Harbourne show that $1\le\frac{\alpha(I)}{\walpha(I)}\le\rho_a(I)\le\rho(I)$.

Bocci and Harbourne also consider upper bounds on the resurgence.  They show in \cite{BH10} that $\rho(I)\le \frac{\reg(I)}{\walpha(I)}$ whenever $I$ defines a zero-dimensional scheme.  The stronger inequality $\rho_a(I)\le \frac{\omega(I)}{\walpha(I)}$ holds if $I$ defines a smooth scheme~\cite{GHV13} (where $\omega(I)$ is the largest degree of a minimal generator of $I$).  The upper bounds in terms of $\omega(I)$ and $\reg(I)$ can fail if the ideal is not smooth or zero dimensional, as the following example shows.

\begin{example}\label{ex:simpleexample}
Let $I=( ab,ac,bc )$ be the ideal of a three-cycle (geometrically, the ideal of three generic points in $\mathbb{P}^2$).  It is well known that $\rho(I)=\frac{4}{3}$.  Now let $J=( abd,acd,bcd)=dI$.  Geometrically, $J$ is the ideal of three generic lines with the hyperplane at infinity in $\mathbb{P}^3$.  One can check that $\walpha(J)=\frac{5}{2}$ and $\reg(J)=3$, so
\[
\frac{\alpha(J)}{\walpha(J)}=\frac{\omega(J)}{\walpha(J)}=\frac{\reg(J)}{\walpha(J)}=\frac{3}{\frac{5}{2}}=\frac{6}{5}	
\]
However, $J^r=d^rI^r$ and $J^{(s)}=d^sI^{(s)}$, so $\rho(J)=\rho(I)=\frac{4}{3}>\frac{6}{5}$.
\end{example}

Example~\ref{ex:simpleexample} highlights the need for ways to compute and bound the resurgence and asymptotic resurgence of ideals defining schemes that are not smooth or zero-dimensional.  In this paper we provide a new characterization of asymptotic resurgence that is computationally effective at least for monomial ideals.  Our work hinges on the observation that one may replace $I^{rt}$ by its integral closure $\overline{I^{rt}}$ when computing $\rho_a(I)$.  More precisely, we prove in Section~\ref{s:AsymptoticResurgence} that
\[
\rho_a(I)=\sup\left\{\frac{s}{r}: I^{(st)}\not\subset \overline{I^{rt}}\mbox{ for all } t\gg 0 \right\}=\sup\left\{\frac{s}{r}: I^{(s)}\not\subset \overline{I^{r}}\right\}.
\]
It is immediate from these equalities that $\rho_a(I)=\rho(I)$ if $I$ is normal (that is, all powers of $I$ are integrally closed).  This partially answers the question of when $\rho(I)=\rho_a(I)$, raised at the end of~\cite{GHV13}.

Once we have replaced powers of $I$ by their integral closures, we can determine $\rho_a(I)$ using valuations.  Given a valuation $v:S\to\ZZ$, we define a Waldschmidt-like constant (which we call a \textit{skew} Waldschmidt constant)
\[
\widehat{v}(I)=\lim_{s\to\infty}\frac{v(I^{(s)})}{s}.
\]
Our main result is Theorem~\ref{thm:generalizedWBound}, which computes the asymptotic resurgence in terms of the skew Waldschmidt constants:  
\[
\rho_a(I)=\sup\left\{\frac{v(I)}{\widehat{v}(I)}:v(I)>0\right\}.
\]
In fact, we show that one only needs to consider the finitely many Rees valuations of $I$ in the above supremum, so the asymptotic resurgence can be characterized as the maximum of finitely many ratios involving skew Waldschmidt constants.

While Theorem~\ref{thm:generalizedWBound} in theory gives an algorithm for computing asymptotic resurgence, it is usually impractical to compute the Rees valuations of an ideal.  This particular hurdle is much easier for the class of monomial ideals, where the Rees valuations may be identified with the linear functionals corresponding to the bounding (affine) hyperplanes of the Newton polyhedron.
Our key insight is that skew Waldschmidt constants can be obtained as the minimum value of a linear functional on the \emph{symbolic polyhedron} defined by Cooper, Embree, H\`a, and Hoefel in \cite{CEHH17}.  Consequently, computing the asymptotic resurgence of any given monomial ideal can be reduced entirely to linear programming.  In particular, since the Newton polyhedron of a monomial ideal has rational vertices, the asymptotic resurgence of monomial ideals is rational.

The paper is organized, for the sake of exposition, with our main results (summarized above) in the final section (Section~\ref{s:AsymptoticResurgence}) while Sections~\ref{s:squarefree} and~\ref{s:Examples} are devoted to developing the intuition to our approach in the context of squarefree monomial ideals and giving a number of examples.  In Section~\ref{s:squarefree} we show that the asymptotic resurgence can be computed by solving linear programs over the symbolic polyhedron, extending earlier work (\cite{CEHH17,Waldschmidt16}) that took a similar approach to computing the Waldschmidt constant. 

In Section~\ref{s:Examples} we exhibit several squarefree monomial ideals whose asymptotic resurgence is strictly less than their resurgence.  Prior to our work, it was shown in~\cite{DHNSST15} and in the preprint~\cite{BDHHSS17} that asymptotic resurgence can differ from resurgence. These examples are ideals of points for which the symbolic cube is not contained in the regular square, violating a containment conjectured by Harbourne~\cite[Conjecture~8.4.3]{BDHKKSS09}. One way in which our examples are qualitatively different is that 
monomial ideals satisfy Harbourne's containment conjecture \cite[Example~8.4.5]{BDHKKSS09}.

We also prove in Section~\ref{s:Examples} that the asymptotic resurgence of a squarefree monomial ideal generated in degree two can be computed solely in terms of the Waldschmidt constant, thus confirming (asymptotically) a conjecture communicated to us at the 2017 BIRS-CMO workshop in Oaxaca.

\vspace{5 pt}

\noindent\textbf{Acknowledgements.} The content of this paper arose out of various attempts to prove the aforementioned conjecture which we encountered at the 2017 BIRS-CMO workshop in Oaxaca.  We are grateful to BIRS-CMO and the organizers and participants of the Oaxaca workshop and for the inspiring discussions and problems we encountered there.  We are especially indebted to Adam Van Tuyl for introducing this conjecture in Oaxaca and for his insight into this problem.

We also thank Alexandra Seceleanu, T\`ai H\`a, Craig Huneke, and Elo\'isa Grifo for their comments on early drafts of this paper.  We are grateful to Kuei-Nuan Lin for alerting us to the papers~\cite{OH98,SVV98}, where the characterization for normality of edge ideals is completed.

\section{Asymptotic resurgence for squarefree monomial ideals}\label{s:squarefree}

We begin by developing the intuition about asymptotic resurgence for squarefee monomial ideals. Squarefree monomial ideals have been of considerable interest in studying the relationship between symbolic and regular powers, and have the dual advantages (even over monomial ideals) of being intersections of prime (rather than primary) ideals, and of having no embedded primes.  In particular, the following result is both standard and very useful.

\begin{theorem}\label{t:whysquarefree}
Let $I\subset S$ be a squarefree monomial ideal.  Then all associated
primes of $I$ are generated by a subset of the variables, and for all
$s$ we have
\[
I^{(s)}=\bigcap_{P\in \Ass(I)} P^{s}.
\]
\end{theorem}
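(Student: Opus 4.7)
The plan is to exploit an irredundant primary decomposition of a squarefree monomial ideal into monomial primes, and then extract the symbolic powers by a routine localization argument. First, I would establish that
\[
I = \bigcap_{\sigma \in \mathcal{F}} P_\sigma, \quad \text{where each } P_\sigma = (x_i : i \in \sigma),
\]
for some collection $\mathcal{F}$ of subsets $\sigma \subseteq \{1,\ldots,n\}$. One clean route is induction on the number of generators via the identity $I = (I:x_i) \cap (I + (x_i))$ for a variable $x_i$ appearing in some minimal generator of $I$; both factors remain squarefree monomial ideals in fewer variables or with fewer generators. Equivalently, this is the Stanley--Reisner correspondence: $I = I_\Delta$ for the simplicial complex $\Delta$ whose faces are the supports of squarefree monomials not in $I$, and the minimal primes of $I$ are $P_{[n]\setminus F}$ as $F$ ranges over the facets of $\Delta$. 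The decomposition is irredundant and consists of primes, so $\Ass(I)$ coincides with the collection of $P_\sigma$ appearing in it, giving both the first assertion and the absence of embedded primes.

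For the second assertion, I fix $P \in \Ass(I)$ and analyze $I^s S_P \cap S$. Because $P$ is minimal over $I$, every other prime $Q$ in the decomposition contains a variable outside $P$; that variable becomes a unit in $S_P$, forcing $Q S_P = S_P$. Consequently, localizing the decomposition collapses to $I S_P = P S_P$, and hence $I^s S_P = P^s S_P$. The contraction $P^s S_P \cap S = P^s$ then follows from $P^s$ being $P$-primary (recall that $P$ is generated by a regular sequence of variables, so all of its ordinary powers are primary): if $f = g/u$ with $g \in P^s$ and $u \notin P$, then $uf \in P^s$ together with $P$-primariness forces $f \in P^s$. Intersecting over $P \in \Ass(I)$ yields the desired identity $I^{(s)} = \bigcap_{P \in \Ass(I)} P^s$.

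The substantive step is the first one -- producing the decomposition into monomial primes -- which is classical (appearing in any treatment of Stanley--Reisner theory), so I expect the paper's proof to be extremely brief, likely a citation followed by the localization calculation. The one subtlety worth flagging is the invocation of the minimality of $P$ among $\Ass(I)$ to kill the other primary components after localization; this is precisely why the absence of embedded primes established in the first half is indispensable for the symbolic power formula in the second half.
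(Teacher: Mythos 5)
Your proof is correct. The paper offers no argument at all for this statement --- it is introduced as ``both standard and very useful'' and used without proof --- so there is nothing to compare against beyond the standard argument, which is exactly what you have written: the prime decomposition $I=\bigcap P_\sigma$ (equivalently the Stanley--Reisner description) gives that $\Ass(I)$ consists of monomial primes with no embedded primes, and then for each minimal prime $P$ the localization collapses to $I S_P = P S_P$, so $I^s S_P \cap S = P^s$ by the $P$-primariness of $P^s$ (which holds since $P$ is generated by a regular sequence of variables). Your flag about minimality of the associated primes being what kills the other components after localizing is the right subtlety to isolate; the argument is complete as written.
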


\begin{remark}
Although we state most of the results in this section for squarefree monomial ideals, the major results hold in more generality. In particular, Theorem~\ref{t:squarefreeresurgence} holds for arbitrary monomial ideals. The chief difference is that the definition of symbolic powers is considerably easier for squarefree ideals, and so one of our key tools, the symbolic polyhedron, requires a more complicated definition.  One can recover everything except Proposition~\ref{p:Symbolicpoly} and Lemma \ref{l:hyperplanes} for monomial ideals without embedded primes, at the price of some intuition and a couple extra pages of technical work.  In view of the more general results in Section \ref{s:AsymptoticResurgence}, we have opted to keep the intuition.  
\end{remark}

\begin{remark}\label{r:IntClosure}
Throughout the section, we assume Corollary~\ref{cor:rhoabar=rhobar}, which asserts that $\rho_a(I)=\sup \left\{\frac{s}{r}: I^{(s)} \not \subset \overline{I^r} \right\}$. We delay the proof of this statement to Section~\ref{s:AsymptoticResurgence} because there is no special intuition in the squarefree case.
\end{remark}

The symbolic powers, and integral closures of regular powers, of
squarefree monomial ideals are best understood in terms of exponent
vectors and related polyhedra.  We require some notation.

\begin{notation}\label{n:exponentvector}
  Let $m\in S$ be any monomial (squarefree or otherwise).  Write
  $m=\prod x_{i}^{e_{i}}$.  We say that the vector $\mathbf{v}=(e_{1},\dots,
  e_{n})$ is the \emph{exponent vector} of $m$, and write $m=\mathbf{x}^{\mathbf{v}}$.
\end{notation}

Observe that $\mathbf{v}$ is always in the (closed) first orthant of
$\mathbb{R}^{n}$.  The geometry of the space of exponent vectors is central to the rest
of the section.  We begin by recalling two important polyhedra.

\begin{definition}\label{d:NewtonPoly}
  Let $I$ be a monomial ideal.  The \emph{Newton polyhedron} of $I$ is
  the convex hull of the exponent vectors occuring in $I$,
  \[
  \text{NP}(I)=\text{conv}\{\mathbf{v}:m=\mathbf{x}^\mathbf{v}\in I\}.
  \]
\end{definition}

\begin{remark}
  The Newton polyhedron is the convex hull of all monomials in $I$,
  not merely the minimal generators.  The convex hull of the (exponent
  vectors of) minimal generators is much smaller, and some texts
  distinguish the two objects by calling the smaller one the
  \emph{Newton polytope} of $I$.  Using this language the Newton
  polyhedron is the Minkowski sum of the Newton polytope and the first
  orthant.  
\end{remark}

The Newton polyhedron characterizes the integral closures of powers of
the ideal.

\begin{proposition}\label{p:Polyhedronclosure}
  Suppose that $\NP$ is the Newton polyhedron of $I$, and fix a monomial
  $m=\mathbf{x}^\mathbf{v}$.  Then $m\in \overline{I^{r}}$ if and only if
  $\frac{\mathbf{v}}{r}\in \NP$.
\end{proposition}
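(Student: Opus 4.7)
The plan is to derive the proposition from two standard facts: (a) the classical characterization of the integral closure of an arbitrary monomial ideal $J$ in terms of its Newton polyhedron, namely that $\overline{J}$ is monomial with $\mathbf{x}^{\mathbf{v}}\in \overline{J}$ if and only if $\mathbf{v}\in \NP(J)\cap \ZZ^n$; and (b) the scaling identity $\NP(I^r)=r\cdot \NP(I)$. Granted both, we immediately get $\mathbf{x}^{\mathbf{v}}\in\overline{I^r}$ iff $\mathbf{v}\in \NP(I^r)=r\cdot\NP(I)$ iff $\mathbf{v}/r\in \NP(I)$, which is the statement of the proposition.

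For (a), the easy direction uses the criterion that a monomial $\mathbf{x}^{\mathbf{v}}$ lies in $\overline{J}$ exactly when some power $(\mathbf{x}^{\mathbf{v}})^k=\mathbf{x}^{k\mathbf{v}}$ lies in $J^k$. The latter forces $k\mathbf{v}$ to dominate (componentwise) a sum $\mathbf{v}_{i_1}+\cdots+\mathbf{v}_{i_k}$ of exponents of generators of $J$; dividing by $k$ places $\mathbf{v}$ in $\NP(J)$. For the converse, a lattice point $\mathbf{v}\in \NP(J)$ satisfies $\mathbf{v}\geq \sum_i \lambda_i \mathbf{v}_i$ for some $\lambda_i\geq 0$ summing to $1$, where the $\mathbf{v}_i$ are the exponents of minimal generators. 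Since this is a linear feasibility problem with rational data, one can arrange the $\lambda_i$ to be rational with common denominator $N$, giving $N\mathbf{v}\geq \sum_i n_i\mathbf{v}_i$ with $n_i=N\lambda_i\in\ZZ_{\geq 0}$ and $\sum n_i=N$; hence $\mathbf{x}^{N\mathbf{v}}\in J^N$, so $\mathbf{x}^{\mathbf{v}}\in \overline{J}$.

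For (b), write $\NP(I)=\mathrm{conv}\{\mathbf{v}_1,\ldots,\mathbf{v}_k\}+\R_{\geq 0}^n$, where the $\mathbf{v}_i$ are the exponents of the minimal monomial generators of $I$. Every monomial in $I^r$ has an exponent of the form $\mathbf{v}_{i_1}+\cdots+\mathbf{v}_{i_r}+\mathbf{w}$ with $\mathbf{w}\in \ZZ_{\geq 0}^n$, and the convex hull of such vectors simplifies to
\[
\NP(I^r)=r\cdot\mathrm{conv}\{\mathbf{v}_1,\ldots,\mathbf{v}_k\}+\R_{\geq 0}^n=r\cdot\NP(I),
\]
using $r\cdot\R_{\geq 0}^n=\R_{\geq 0}^n$ for $r>0$.

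The main obstacle is the converse direction of (a): producing an algebraic witness for $\mathbf{x}^{\mathbf{v}}\in\overline{J}$ from the purely geometric condition $\mathbf{v}\in \NP(J)$. The key leverage is that $\NP(J)$ is a rational polyhedron, so one can clear denominators in a convex decomposition to land on an honest power of $J$. Everything else is routine bookkeeping with exponent vectors.
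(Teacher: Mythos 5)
Your proof is correct. Note that the paper does not actually prove Proposition~\ref{p:Polyhedronclosure}: it is quoted as a standard fact about monomial ideals (the usual reference is Swanson--Huneke, \emph{Integral Closure of Ideals, Rings, and Modules}, where the integral closure of a monomial ideal is identified with the monomial ideal of lattice points in its Newton polyhedron), so there is no in-paper argument to compare against. Your route is the standard one: reduce to the criterion that a monomial $m$ lies in $\overline{J}$ if and only if $m^k\in J^k$ for some $k\ge 1$, translate both directions into componentwise inequalities on exponent vectors, use rationality of $\NP(J)$ to clear denominators in a convex decomposition, and conclude with the scaling identity $\NP(I^r)=r\cdot\NP(I)$, which follows from $\mathrm{conv}(A+B)=\mathrm{conv}(A)+\mathrm{conv}(B)$ and $rC=C+\cdots+C$ for convex $C$. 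One small remark: the only place a genuine equation of integral dependence enters is the forward half of the quoted criterion ($m\in\overline{J}$ implies $m^k\in J^k$ for some $k$); citing it is legitimate, but if you want the write-up self-contained, observe that in an equation $m^N+a_1m^{N-1}+\cdots+a_N=0$ with $a_i\in J^i$ the monomial $m^N$ must appear in the support of some $a_im^{N-i}$, and since $J^i$ is a monomial ideal this forces $m^i\in J^i$. With that observation (and the harmless implicit step $\mathrm{conv}(\ZZ_{\ge 0}^n)=\R_{\ge 0}^n$), your argument is complete.
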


The symbolic powers of the squarefree ideal $I$ may also be described
in terms of a polyhedron.  Let $m=\prod x_{i}^{e_{i}}$ be a monomial and $P$ a monomial 
prime.  Then we have $m\in P$ if and only if $\displaystyle{\sum_{x_{i}\in
  P}e_{i}\geq 1}$ and $m\in P^{s}$ if and only if $\displaystyle{\sum_{x_{i}\in
  P}e_{i}\geq s}$.  This motivates the definition of the symbolic
polyhedron:

\begin{definition}\label{d:Symbolicpolyhedron}
  Let $I$ be a squarefree monomial ideal, and write $I=\cap_{P\in
    \Ass(I)}P$.  The \emph{symbolic polyhedron} associated to $I$ is
  the intersection of the half-spaces coming from the associated
  primes:
  \[
  \SP(I)=\bigcap_{P\in\Ass(I)}\{(e_{1},\dots, e_{n}):\sum_{x_{i}\in
    P}e_{i}\geq 1\}.
  \]
\end{definition}

\begin{remark}
  Like the Newton polyhedron, the symbolic polyhedron of $I$ is unbounded and 
  contained in the first orthant.  Unlike the Newton polyhedron, our
  definition works only for squarefree ideals.  For the definition of
  the symbolic polyhedron in the general monomial case, see the paper
  of Cooper, Embree, H\`{a}, and Hoefel \cite{CEHH17}.
\end{remark}

The following characterization of symbolic powers in terms of the
symbolic polyhedron is immediate.

\begin{proposition}\label{p:Symbolicpoly}
Suppose that $\SP$ is the symbolic polyhedron of $I$, and fix a
monomial $m$.  Then $m=\mathbf{x}^\mathbf{v}\in I^{(s)}$ if and only if
$\frac{\mathbf{v}}{s}\in \SP$.
\end{proposition}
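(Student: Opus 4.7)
The proof is essentially a direct unpacking of three ingredients that are already in place: Theorem~\ref{t:whysquarefree}, the preceding characterization of membership in powers of a monomial prime, and Definition~\ref{d:Symbolicpolyhedron}. My plan is to chain them together.

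First, I would invoke Theorem~\ref{t:whysquarefree} to write $I^{(s)} = \bigcap_{P \in \Ass(I)} P^s$, so that $m \in I^{(s)}$ if and only if $m \in P^s$ for every $P \in \Ass(I)$. Each such $P$ is generated by a subset of the variables, again by Theorem~\ref{t:whysquarefree}.

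Next, I would record the elementary fact already noted in the paragraph preceding Definition~\ref{d:Symbolicpolyhedron}: for a monomial $m = \prod x_i^{e_i}$ and a monomial prime $P$, membership $m \in P^s$ is equivalent to $\sum_{x_i \in P} e_i \geq s$. (This is immediate since $P^s$ is generated by monomials of the form $\prod_{j} x_{i_j}$ where the $i_j$ range over length-$s$ sequences in $P$, so a monomial lies in $P^s$ exactly when it picks up total degree at least $s$ from variables in $P$.) Combining with the previous step gives
\[
m \in I^{(s)} \iff \sum_{x_i \in P} e_i \geq s \text{ for every } P \in \Ass(I).
\]

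Finally, dividing through by $s$ and comparing to Definition~\ref{d:Symbolicpolyhedron}, the right-hand side says exactly that $\frac{\mathbf{v}}{s}$ satisfies the defining inequality $\sum_{x_i \in P} \frac{e_i}{s} \geq 1$ for each associated prime $P$, i.e.\ $\frac{\mathbf{v}}{s} \in \SP(I)$. This yields the desired equivalence.

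Honestly, I do not anticipate a real obstacle here; the statement is a translation between algebraic and polyhedral language, and all the serious content is packaged in the cited theorem of Lyubeznik/standard squarefree symbolic power description. The only minor care needed is to ensure that one uses \emph{all} associated primes (and not just the minimal primes over individual generators), but since $I$ is squarefree the two notions coincide, which is precisely what Theorem~\ref{t:whysquarefree} guarantees.
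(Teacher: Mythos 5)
Your proof is correct and matches the paper's intended argument: the paper declares the proposition ``immediate'' precisely because it follows by combining Theorem~\ref{t:whysquarefree}, the observation that $m\in P^s$ iff $\sum_{x_i\in P}e_i\geq s$, and Definition~\ref{d:Symbolicpolyhedron}, exactly as you do. No gaps.
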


Every convex polyhedron may be presented either as the convex hull of
some collection of points or as the intersection of some collection
of half-spaces.  Unfortunately, translating between the two
presentations requires solving a linear programming problem.  Since
the Newton polyhedron is presented as a convex hull and the symbolic
polyhedron is presented as an intersection, the difference between the
two is unavoidably somewhat opaque.  The difficulty of the general
linear programming problem likely accounts for the difficulty of the
questions involving these relationships.

\begin{example}\label{e:triangle}
Let $I=(ab,ac,bc)$, the standard example of a squarefree ideal whose
symbolic and regular powers are different.  We have $I=(a,b)\cap
(a,c)\cap (b,c)$.

The symbolic polyhedron of $I$ is defined as the intersection of the
three half-spaces $\SP=\{(a,b,c):a+b\geq 1, a+c\geq 1, b+c\geq 1\}$.
Observe that $\SP$ contains the point $\left(\frac{1}{2}, \frac{1}{2},
\frac{1}{2}\right)$.  This corresponds to the fact that $abc\in I^{(2)}$.

The Newton polyhedron of $I$ is defined as the convex hull of the
points corresponding to the monomials $ab$, $ac$, and $bc$ (together with infinitely many other points deeper in the first orthant).  It does
not contain the point $\left(\frac{1}{2}, \frac{1}{2}, \frac{1}{2}\right)$ since
the three generators cut out the affine hyperplane $a+b+c=2$.  In
fact, the Newton polyhedron is the intersection of the four
half-spaces $a+b\geq 1$, $a+c\geq 1$, $b+c\geq 1$, and $a+b+c\geq 2$.

We may usefully view the symbolic polyhedron as the intersection of
four half-spaces as well:  $a+b\geq 1$, $a+c\geq 1$, $b+c\geq 1$, and
the redundant $a+b+c\geq \frac{3}{2}$.  The distinction is that the
Newton polyhedron uses $a+b+c\geq 2$ because $2$ is the solution to the
integer linear program ``minimize $a+b+c$ subject to the other three
inequalities'' while the symbolic polyhedron uses $a+b+c\geq
\frac{3}{2}$ because $\frac{3}{2}$ is the solution to the
corresponding rational linear program.
\end{example}

We introduce some notation to deal with half-spaces and their defining
equations.

\begin{notation}\label{n:skewWaldschmidt}
  Let $v:\mathbb{R}^{n}\to \mathbb{R}$ be a (positive semidefinite)
  linear functional on the space of exponent vectors.  We refer to $v$
  as a \emph{skew valuation} on $S$.  For every skew valuation and
  every positive $c$, we define an associated hyperplane and
  half-space,
  \[
  P_{v,c}=\{\mathbf{v}: v(\mathbf{v})=c\} \qquad \text{and} \qquad
  H_{v,c}=\{\mathbf{v}:v(\mathbf{v})\geq c\}.
  \]
  The hyperplane $P_{v,c}$ is thus the boundary of the half-space
  $H_{v,c}$.  For a polyhedron $C$ in the first orthant, we say that
  $H_{v,c}$ is a \emph{supporting half-space} of $C$ if $H_{v,c}$
  contains $C$ and $P_{v,c}$ has nontrivial intersection with $C$.  We
  say that $H_{v,c}$ is a \emph{defining half-space} of $C$ if in
  addition $C\cap P_{v,c}$ has codimension one.

  Now fix a skew valuation $v$ and a monomial ideal $I$, and set
  $v(I)=\min\{v(\mathbf{v}):m=\mathbf{x}^{\mathbf{v}}\in 
  I\}$.  
  Since $v$ is positive semi-definite, $v(I)\ge 0$.  If $v(I)>0$, we say that
  $v$ is \textit{supported} on $I$.
  
  The \emph{skew Waldschmidt constant} $\widehat{v}(I)$ is the limit
  \[
  \widehat{v}(I)=\lim_{s\to\infty}\frac{v(I^{(s)})}{s},
  \]
  which exists because $\{v\left(I^{(s)}\right)\}$ is subadditive.
  If $v$ is supported on $I$, then $\widehat{v}(I)>0$ (this is true quite generally from Swanson's result
  that $I^{(sh)}\subset I^s$ for some constant $h$ --- see Lemma~\ref{lem:nonzeroskewwaldschmidt}).
  If $v$ is supported on $I$, the \emph{skew resurgence} of $I$ with respect to $v$ is
  \[
  v_{a}(I)=\frac{v(I)}{\widehat{v}(I)}.
  \]
\end{notation}

\begin{remark}
We use the term \textit{valuation} in Notation~\ref{n:skewWaldschmidt} because linear functionals $v:\ZZ^n\to\ZZ$ are naturally identified with \textit{monomial valuations} on $S$ (see~\cite[Definition~6.14]{IntegralClosure}).  We abuse notation somewhat by considering linear functionals on $\R^n$ instead of $\ZZ^n$, but this is harmless and helps to preserve geometric intuition.
\end{remark}

Note that when $v=\alpha$ is defined by $\alpha(\mathbf{v})=\deg \mathbf{x}^{\mathbf{v}}$, the skew Waldschmidt constant coincides with the standard Waldschmidt constant $\widehat{\alpha}$. 

Our main result will be that the asymptotic resurgence of $I$ is equal
to the maximum of its skew resurgences. We begin with a technical lemma whose proof is straightforward.

\begin{lemma}\label{l:hyperplanes}  Let $I$ be a squarefree monomial
  ideal with Newton and symbolic polyhedrons $\NP$ and $\SP$
  respectively, and write $I$ as an intersection of its associated
  primes, $I=\displaystyle\bigcap_{P\in\Ass(I)}P$.  For each $P\in \Ass(I)$, set
  $\displaystyle{v_{P}=\sum_{x_{i}\in P}e_{i}}$.  Then:

  \begin{enumerate}
  \item The defining half-spaces of $\SP$ are precisely the half-spaces
    $H_{v_{P},1}$ for $P\in \Ass(I)$.
  \item Every half-space of the form $H_{v_{P},1}$ is a defining
    half-space of $\NP$.

    Furthermore, for an arbitary skew valuation
    $v$ with integer coefficients, let $z_{v}$ be the solution to the
    integer linear programming problem ``minimize $v$, subject to the
    constraints $H_{v_{P},1}$ for all $P$.''  Then the half-space
    $H_{v,z_{v}}$ contains $\NP$, and every defining half-space of $\NP$
    arises in this way.

  \item For an arbitrary skew valuation
    $v$ with integer coefficients, let $q_{v}$ be the solution to the
    rational linear programming problem ``minimize $v$, subject to the
    constraints $H_{v_{P},1}$ for all $P$.''  Then the half-space
    $H_{v,q_{v}}$ is a supporting half-space of $\SP$.

  \item For an arbitrary skew valuation $v$ with integer coefficients,
    define $q_{v}$ as above.  Then $q_{v}$ is equal to the skew
    Waldschmidt constant, and we have
    \[
    \widehat{v}(I)=q_{v}=\min\left\{\frac{v(I^{(s)})}{s}\right\}.
    \]

  \item Fix a monomial $m=\mathbf{x}^{\mathbf{v}}$.  Then $m\in
    I^{(s)}$ if and only if $v(\mathbf{v})\geq s\widehat{v}(I)$ for
    all skew valuations $v$.  
  \end{enumerate}
\end{lemma}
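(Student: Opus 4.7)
The plan is to prove the five parts in order, using Proposition~\ref{p:Symbolicpoly} and elementary linear programming. I would begin by observing that the integer points of $\SP$ and $\NP$ coincide, both being the exponent vectors of monomials in $I$: the former by Proposition~\ref{p:Symbolicpoly} applied with $s=1$, and the latter by definition of $\NP$. For part (1), the half-spaces $H_{v_P,1}$ are supporting by construction, so it suffices to show each gives a codimension-one facet. For each $P=(x_{i_1},\ldots,x_{i_k})\in\Ass(I)$, I would exhibit a point on $P_{v_P,1}$ strictly inside all other defining half-spaces: assign weight $1/k$ to each $x_{i_j}\in P$ and a large weight $M\gg 1$ to each variable outside $P$. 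Then $v_P=1$, and since any other $Q\in\Ass(I)$ satisfies $Q\not\subset P$ by minimality of associated primes, $Q$ contains some variable outside $P$, forcing $v_Q\geq M>1$. For part (2), each $H_{v_P,1}$ is defining for $\NP$ because a minimal generator of $I$ lying in $P$ realizes $v_P=1$. The claim about $z_v$ then follows from the lattice coincidence: since $\NP$ has integer vertices, the integer LP over the constraints $\{H_{v_P,1}\}$ coincides with $\min_{\NP}v$, making $H_{v,z_v}$ the supporting half-space of $\NP$ in direction $v$; every defining half-space of $\NP$ has a primitive integer normal and so arises this way.

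For (3), $q_v=\min_{\SP}v$ by construction of the rational LP, so $H_{v,q_v}$ supports $\SP$. For (4), the inequality $v(I^{(s)})/s\geq q_v$ is immediate from Proposition~\ref{p:Symbolicpoly}, since $\mathbf{x}^{\mathbf{v}}\in I^{(s)}$ forces $\mathbf{v}/s\in \SP$ and hence $v(\mathbf{v}/s)\geq q_v$. Conversely, the rational LP attains its optimum at a rational vertex $\mathbf{v}^*/s^*$ of $\SP$, which (clearing denominators) yields a monomial $\mathbf{x}^{\mathbf{v}^*}\in I^{(s^*)}$ with $v(\mathbf{v}^*)/s^*=q_v$. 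Combined with the subadditivity of $s\mapsto v(I^{(s)})$ and Fekete's lemma (noted in Notation~\ref{n:skewWaldschmidt}), this shows the limit is attained as a minimum and equals $q_v$.

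Finally, for (5), Proposition~\ref{p:Symbolicpoly} reduces the claim to: $w\in \SP$ if and only if $v(w)\geq \widehat{v}(I)$ for every nonnegative-coefficient skew valuation $v$. The forward direction is immediate from part (4). For the converse, any closed convex polyhedron is the intersection of its supporting half-spaces (separating hyperplane theorem), and since $\SP$ has the first orthant as its recession cone, every supporting half-space has a nonnegative normal, so restricting to skew valuations loses nothing. I expect the main subtlety to be in part (1), specifically in verifying that the $H_{v_P,1}$ exhaust the defining half-spaces of $\SP$ (i.e.\ that none is redundant); this is precisely where the minimality of the associated primes of a squarefree monomial ideal is essential, and it is also the reason the argument does not transfer verbatim to general monomial ideals, as flagged in the remark preceding the lemma.
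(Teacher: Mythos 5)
The paper never actually writes out a proof of this lemma (it is introduced as ``a technical lemma whose proof is straightforward''), so the relevant comparison is with the routine polyhedral argument the authors have in mind, and your proposal is essentially that argument. Parts (1), (3), (4), and (5) are correct as sketched: in (1) the witness point built from minimality of the associated primes (no $Q\in\Ass(I)$ is contained in $P$, so $v_Q\ge M$ at your point) lies in the open first orthant and strictly inside every other constraint, and a small neighborhood of it inside $P_{v_P,1}$ shows the face has codimension one; in (4) the clearing-denominators argument at a vertex of $\SP$, combined with Proposition~\ref{p:Symbolicpoly} and Fekete, is exactly right.

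The one step whose stated justification does not prove what is claimed is the first assertion of (2), that each $H_{v_P,1}$ is a \emph{defining} half-space of $\NP$. Producing a minimal generator $m$ with $v_P(m)=1$ only shows the hyperplane meets $\NP$, i.e.\ that the half-space is supporting; ``defining'' requires $\NP\cap P_{v_P,1}$ to be codimension one, and even the existence of such a generator needs an argument (every generator lies in $P$, but not every one has $v_P=1$). Both points are repaired by the same minimality device you used in (1): since $P$ is a minimal prime (minimal vertex cover), for each $x\in P$ there is a generator $g_x$ with $\supp(g_x)\cap P=\{x\}$; the face then contains the exponent vectors of all the $g_x$ together with the rays obtained from them by adding the unit vectors of the variables outside $P$, and these affinely span an $(n-1)$-dimensional set. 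Two smaller points: the opening claim that the lattice points of $\NP$ are exactly the exponent vectors of monomials of $I$ is not ``by definition'' --- it is the statement $\overline{I}=I$, true for squarefree ideals because they are intersections of monomial primes --- though your later use only needs the trivial containment plus the fact that vertices of $\NP$ are exponent vectors of monomials of $I$; and the linear programs in (2)--(4) must implicitly include the constraints $e_i\ge 0$ (otherwise the rational LP can be unbounded below when $v$ has zero coefficients), a point the paper is equally casual about. Finally, your closing sentence inverts the two directions of (1): minimality is what makes each $H_{v_P,1}$ non-redundant (which your witness point proves), whereas exhaustion --- every defining half-space being some $H_{v_P,1}$ --- is automatic from $\SP$ being presented as the intersection of these half-spaces, with coordinate facets excluded by the requirement $c>0$.
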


\begin{theorem}\label{t:squarefreeresurgence}
Let $I$ be a squarefree ideal.  The asymptotic resurgence of $I$ is
equal to the maximum of its skew resurgences.  That is,
\[
\rho_{a}(I)=\sup\{v_{a}(I): v \text{ is a skew valuation supported on } I\}.
\]
\end{theorem}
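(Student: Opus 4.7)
The plan is to prove both inequalities in the stated identity after first invoking Corollary~\ref{cor:rhoabar=rhobar} (assumed per Remark~\ref{r:IntClosure}) to replace $\rho_a(I)$ with $\sup\{s/r : I^{(s)} \not\subset \overline{I^r}\}$. All arguments are then translated into the polyhedral dictionary of Propositions~\ref{p:Polyhedronclosure} and~\ref{p:Symbolicpoly}: a monomial $m = \mathbf{x}^{\mathbf{v}}$ lies in $\overline{I^r}$ iff $\mathbf{v}/r \in \NP$, and lies in $I^{(s)}$ iff $\mathbf{v}/s \in \SP$.

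For the inequality $\rho_a(I) \geq v_a(I)$, fix a skew valuation $v$ supported on $I$. For each $s$, choose $m_s = \mathbf{x}^{\mathbf{v}_s} \in I^{(s)}$ realizing the minimum $v(m_s) = v(I^{(s)})$, and let $r_s = \lfloor v(I^{(s)})/v(I) \rfloor + 1$. Then $v(\mathbf{v}_s)/r_s < v(I)$, and since $\NP$ is contained in the half-space $\{\mathbf{w} : v(\mathbf{w}) \geq v(I)\}$ (because $v(I)$ is the minimum of $v$ on $\NP$), we conclude $\mathbf{v}_s/r_s \notin \NP$, hence $m_s \notin \overline{I^{r_s}}$. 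This witnesses $I^{(s)} \not\subset \overline{I^{r_s}}$, and sandwiching $r_s$ between $v(I^{(s)})/v(I)$ and $v(I^{(s)})/v(I) + 1$ shows $s/r_s \to v(I)/\widehat{v}(I) = v_a(I)$ as $s \to \infty$, giving $\rho_a(I) \geq v_a(I)$.

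For the reverse inequality, suppose $I^{(s)} \not\subset \overline{I^r}$ and pick $m = \mathbf{x}^{\mathbf{v}} \in I^{(s)} \setminus \overline{I^r}$; then $\mathbf{v}/r \notin \NP$. By Lemma~\ref{l:hyperplanes}(2), some defining half-space $H_{v,v(I)}$ of $\NP$ strictly separates $\mathbf{v}/r$ from $\NP$; because $\mathbf{v}/r$ lies in the first orthant this separating half-space cannot be a coordinate one, which forces $v(I) > 0$ so that $v$ is supported on $I$. Thus $v(\mathbf{v}) < r \cdot v(I)$, while Lemma~\ref{l:hyperplanes}(5) applied to $m \in I^{(s)}$ yields $v(\mathbf{v}) \geq s\widehat{v}(I)$. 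Combining gives $s/r < v(I)/\widehat{v}(I) = v_a(I)$, proving $\rho_a(I) \leq \sup\{v_a(I) : v \text{ supported on } I\}$.

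The main obstacle is purely the bookkeeping that verifies the separating valuation produced in the upper-bound argument is genuinely supported on $I$ (the key geometric input being that the Newton polyhedron is upward closed and sits strictly above the origin when $I$ is proper), and that Lemma~\ref{l:hyperplanes} correctly identifies $v(I)$ with the minimum of $v$ on $\NP$ and $\widehat{v}(I)$ with its minimum on $\SP$. Once these identifications are secured, the theorem reduces to the two short polyhedral containment arguments above.
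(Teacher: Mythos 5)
Your proposal is correct and follows essentially the same route as the paper: both reduce $\rho_a(I)$ to $\sup\{s/r : I^{(s)}\not\subset\overline{I^r}\}$ via Corollary~\ref{cor:rhoabar=rhobar} and then run the two inequalities through the polyhedral dictionary (Propositions~\ref{p:Polyhedronclosure}, \ref{p:Symbolicpoly} and Lemma~\ref{l:hyperplanes}), using that $v(I)$ is the minimum of $v$ on $\NP$ and $\widehat{v}(I)=\lim v(I^{(s)})/s$ is its minimum on $\SP$. The only differences are cosmetic bookkeeping: you build an explicit witness sequence $(s,r_s)$ where the paper argues by contradiction at a fixed ratio with $t\gg 0$, and you select an explicit violated defining facet of $\NP$ where the paper invokes the ``for all skew valuations'' membership criterion.
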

\begin{proof}
  Recall from Remark~\ref{r:IntClosure} that we are using the characterization $\rho_a(I)=\sup \left\{\frac{s}{r}: I^{(s)} \not \subset \overline{I^r} \right\}$ which is proved in Section~\ref{s:AsymptoticResurgence}.
  
  To prove $\rho_{a}(I)$ is at most the maximum of the skew resurgences, it
  suffices to show that, whenever $\frac{s}{r}$ is greater than any skew
  resurgence, we have $I^{(s)}\subset \overline{I^{r}}$.  Fix such an
  $r$ and $s$ (meaning $s > rv_a(I)$ for all $v$), and let $m=\mathbf{x}^{\mathbf{v}}\in I^{(s)}$.  Then
  for every $v$ we have $v(\mathbf{v})\geq s\widehat{v}(I)
  > r v_{a}(I)\widehat{v}(I) =rv(I)$, so $m\in
  \overline{I^{r}}$.

To prove $\rho_{a}(I)$ is at least the maximum of the skew resurgences, suppose to the contrary that $\rho_{a}(I) < v_{a}$ for some $v$. Choose $r$ and $s$ such that \[\rho_a(I) < \frac{s}{r} < v_a = \frac{v(I)}{\widehat{v}(I)}.\] Since the inequality is strict, we have \[  \frac{s}{r} < \frac{v(I)}{\frac{v(I^{(st)})}{st}} \] for sufficiently large $t$. Rearranging, $v(I^{(st)}) < rtv(I)=v(\overline{I^{rt}})$. Consequently, $I^{(st)} \not \subseteq \overline{I^{rt}}$, so $\frac{st}{rt} \le \rho_a(I)$, a contradiction.
\end{proof}

\begin{corollary}\label{c:finitemax}
Let $I$ be a squarefree ideal.  The asymptotic resurgence of $I$ is
equal to the maximum of its skew resurgences, taken over the (finitely
many) skew valuations supported on $I$ that correspond to defining half-spaces of the Newton
polyhedron.  In particular, $\rho_{a}(I)$ is rational.
\end{corollary}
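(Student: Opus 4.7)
The plan is to refine the proof of Theorem~\ref{t:squarefreeresurgence} by observing that its supremum is actually attained on the finite set of skew valuations associated to defining half-spaces of $\NP$. Since $\NP$ is a rational polyhedron, it has finitely many defining half-spaces $H_{v_1,v_1(I)},\dots,H_{v_k,v_k(I)}$ with each $v_i$ integer-valued. I would separate these into \emph{supported} valuations (those with $v_i(I)>0$) and \emph{unsupported} ones, the latter being the coordinate half-spaces $e_j\geq 0$ bounding the first orthant, for which $v_i(I)=0$.

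The inequality $\rho_a(I)\geq \max_i (v_i)_a(I)$ over supported $v_i$ is immediate from Theorem~\ref{t:squarefreeresurgence}. For the reverse inequality, the key observation from Proposition~\ref{p:Polyhedronclosure} is that $m=\mathbf{x}^\mathbf{v}\in\overline{I^r}$ if and only if $v_i(\mathbf{v})\geq rv_i(I)$ for every defining $v_i$. The ``at most'' half of Theorem~\ref{t:squarefreeresurgence} then reprises verbatim using only the finitely many $v_i$: if $s/r$ strictly exceeds $(v_i)_a(I)$ for every supported $v_i$, then for any $m=\mathbf{x}^\mathbf{v}\in I^{(s)}$, Lemma~\ref{l:hyperplanes} gives $v_i(\mathbf{v})\geq s\widehat{v_i}(I)>rv_i(I)$ for supported $v_i$, while unsupported $v_i$ contribute the trivial inequality $v_i(\mathbf{v})\geq 0 = rv_i(I)$. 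Hence $m\in\overline{I^r}$, which forces $I^{(s)}\subset \overline{I^r}$ and therefore $\rho_a(I)\leq s/r$. Letting $s/r$ approach the maximum from above yields equality.

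Rationality is then automatic: for each integer-valued $v_i$, the quantity $v_i(I)$ is the optimal value of a rational linear program on $\NP$ and $\widehat{v_i}(I)$ is the optimal value of a rational linear program on $\SP$ (Lemma~\ref{l:hyperplanes}), so both are rational, and a maximum of finitely many rationals is rational. I do not foresee a deep obstacle here; the substantive point is to isolate the correct finite set of valuations and to verify that unsupported defining half-spaces can safely be ignored, after which the reduction from Theorem~\ref{t:squarefreeresurgence} is essentially a repackaging of Proposition~\ref{p:Polyhedronclosure}.
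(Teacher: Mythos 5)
Your argument is correct. The only cosmetic wrinkle is the phrase ``which forces $I^{(s)}\subset \overline{I^{r}}$ and therefore $\rho_{a}(I)\leq s/r$'': containment for a single pair does not bound $\rho_{a}$, but since your argument applies to \emph{every} pair with $\frac{s}{r}$ exceeding the finite maximum, the characterization $\rho_{a}(I)=\sup\{\frac{s}{r}:I^{(s)}\not\subset\overline{I^{r}}\}$ from Remark~\ref{r:IntClosure} does give $\rho_{a}(I)\leq\max_{i}(v_{i})_{a}(I)$, which is what you say in the next sentence anyway. Note, however, that your route is not the one the paper takes for this corollary: the paper states it without proof and justifies it in the subsequent remark by appealing to the general Theorem~\ref{thm:generalizedWBound} (asymptotic resurgence as a maximum over the finitely many Rees valuations) together with the fact that the Rees valuations of a monomial ideal are exactly the skew valuations of the defining half-spaces of the Newton polyhedron, citing \cite[Theorem~10.3.5]{IntegralClosure}. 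You instead refine the geometric proof of Theorem~\ref{t:squarefreeresurgence}: you use that $\NP$ is a rational polyhedron equal to the intersection of its finitely many facet half-spaces $H_{v_{i},v_{i}(I)}$, combine Proposition~\ref{p:Polyhedronclosure} with Lemma~\ref{l:hyperplanes} to rerun the ``at most'' direction using only those finitely many valuations, and correctly observe that the unsupported facets (which are indeed the coordinate half-spaces, since a facet with constant $0$ must meet the first orthant in codimension one) impose only the trivial inequality $v_{i}(\mathbf{v})\geq 0$ --- in fact your argument needs nothing beyond positive semidefiniteness there. Your approach buys a self-contained, elementary proof within the polyhedral framework of Section~\ref{s:squarefree}, with rationality falling out of the rational linear programs on $\SP$ and the integrality of $v_{i}(I)$; the paper's route buys brevity and generality, since the Rees-valuation statement (Theorem~\ref{thm:generalizedWBound}) covers arbitrary ideals, at the cost of importing the identification of Rees valuations from \cite{IntegralClosure}.
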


\begin{remark}
Theorem~\ref{t:squarefreeresurgence} and Corollary~\ref{c:finitemax} both follow immediately from Theorem~\ref{thm:generalizedWBound}; we need only recognize that the Rees valuations of a monomial ideal are precisely the skew valuations that correspond to defining half-spaces of the Newton polyhedron.  For a proof of this fact, see~\cite[Theorem~10.3.5]{IntegralClosure}.  The proofs given above emphasize the geometric intuition at hand for squarefree monomial ideals, which is not available in general.
\end{remark}

The rest of the section describes an inductive approach to the asymptotic resurgence of a squarefree monomial ideal.  While it is not clear that Theorem \ref{t:algorithm} would yield a more efficient algorithm for computing asymptotic resurgence, it is of theoretical and practical use (as we will see in Section~\ref{s:Examples}).

\begin{proposition}\label{p:parallelfacet}
Let $I$ be a squarefree monomial ideal with Newton polyhedron $\NP$. If $I$ is equigenerated in degree $d$, then $\NP$ has at most one facet not parallel to a coordinate axis, namely $P_{\alpha,d}\cap \NP$, where $\alpha$ is the valuation $\alpha(\mathbf{v}) = \deg \mathbf{x}^{\mathbf{v}}$.

More generally, suppose that the exponent vectors of all generators of $I$ are contained in some hyperplane $P_{v,c}$.  Then $\NP$ has at most one facet not parallel to a coordinate axis,  namely $P_{v,c} \cap \NP$.
\end{proposition}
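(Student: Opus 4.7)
The first statement follows from the second by taking $v = \alpha$ and $c = d$, so I focus on the general case. The plan is to combine the Minkowski-sum presentation $\NP = \calP + \R_{\ge 0}^n$, where $\calP$ denotes the Newton polytope (the convex hull of the exponent vectors of the generators of $I$), with a dimension count.

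Every facet of $\NP$ has the form $F = \NP \cap P_{w,c'}$ for some supporting halfspace $H_{w,c'}$, where $w$ has nonnegative coefficients and $c' = \min_{\mathbf{v} \in \NP} w(\mathbf{v})$. The facet $F$ is parallel to the $x_i$-axis precisely when the $x_i$-coefficient of $w$ vanishes: if that coefficient is zero then $P_{w,c'}$ is invariant under translation by $e_i$, and $F$ inherits this invariance since $\NP$ is closed under adding $e_i$; conversely, if the coefficient is strictly positive then $F$ admits no such translations. Thus ``not parallel to any coordinate axis'' is equivalent to $w$ having all coefficients strictly positive.

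For such a $w$, I claim $F \subseteq \calP$. Given $\mathbf{v} \in F$, decompose $\mathbf{v} = \mathbf{p} + \mathbf{q}$ with $\mathbf{p} \in \calP$ and $\mathbf{q} \in \R_{\ge 0}^n$. Because $w$ is nonnegative on the orthant, $c' = \min_{\NP} w = \min_{\calP} w$, so $w(\mathbf{p}) \ge c'$ and $w(\mathbf{q}) \ge 0$ must both attain equality. The strict positivity of $w$ together with $\mathbf{q} \in \R_{\ge 0}^n$ then forces $\mathbf{q} = 0$, giving $\mathbf{v} = \mathbf{p} \in \calP$.

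Now invoke the hypothesis that all generators of $I$ lie on $P_{v,c}$: it follows that $\calP \subseteq P_{v,c}$, and therefore $F \subseteq P_{v,c} \cap \NP$. Since $v \ge c$ on all generators and $v$ is nonnegative on $\R_{\ge 0}^n$, we have $v \ge c$ throughout $\NP$, so $P_{v,c}$ is a supporting hyperplane and $P_{v,c} \cap \NP$ is a face of $\NP$. As $\NP$ is full-dimensional in $\R^n$ and $P_{v,c}$ is a hyperplane, this face has dimension at most $n-1 = \dim F$, which forces $F = P_{v,c} \cap \NP$. Hence every non-axis-parallel facet must coincide with this single face. The main obstacle is the careful identification of axis-parallelism with the vanishing of a coordinate of $w$; once the Minkowski decomposition is in hand, the rest reduces to the dimension count above.
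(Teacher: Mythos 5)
Your proof is correct, but it runs on different machinery than the paper's. The paper argues by contradiction at the level of monomials: it picks a monomial $m=\mathbf{x}^{\mathbf{v}}\in I$ with $\mathbf{v}$ on a putative extra facet $F\subseteq P_{v,c}$, and uses the divisibility trick ($m=x_im'$ forces both $m'\in I$ and $x_i^2m'\in I$, giving $v(x_i)\le 0$ and $v(x_i)\ge 0$, so $v(x_i)=0$, contradicting non-axis-parallelism) to conclude that the lattice points of $F$ are minimal generators; hence $F$ is the convex hull of minimal generators and lies in the two distinct hyperplanes $P_{v,c}$ and $P_{\alpha,d}$, which is impossible for an $(n-1)$-dimensional face. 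You instead work with the decomposition $\NP=\calP+\R_{\ge 0}^n$ (stated in the paper's remark on the Newton polytope): after identifying ``not parallel to any coordinate axis'' with strict positivity of the facet normal $w$, the splitting $\mathbf{v}=\mathbf{p}+\mathbf{q}$ and the equality case in $w(\mathbf{p})\ge c'$, $w(\mathbf{q})\ge 0$ force $\mathbf{q}=0$, so $F\subseteq\calP\subseteq P_{v,c}$, and a dimension count gives $F=P_{v,c}\cap\NP$. Your route buys a few things: it proves the general statement directly (the paper proves the equigenerated case and says the general case is similar), it makes explicit the equivalence between axis-parallelism and vanishing of a coordinate of the facet normal (used only implicitly in the paper), and it sidesteps the paper's unstated steps that a non-axis-parallel facet is bounded and that its vertices are exponent vectors of monomials of $I$; the paper's argument, in turn, is shorter and more ideal-theoretic. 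One small point you could make explicit is why facet normals of $\NP$ have nonnegative coefficients (because $\NP$ is closed under adding $\R_{\ge 0}^n$, a functional bounded below on $\NP$ must be nonnegative on the orthant), but this is routine.
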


\begin{proof}
We prove the first statement; the general case is similar. Suppose $\NP$ has some other facet $F$ not parallel to a coordinate axis, and choose $v$ and $c$ such that $F \subseteq P_{v,c}$. Let $m=\mathbf{x}^{\mathbf{v}} \in I$ be such that $\mathbf{v} \in F$. We claim that $m$ is a minimal generator for $I$. Indeed, suppose not. Then $m=x_i m'$ for some $m' \in I$. It follows that both $m'$ and $x_i^2 m'$ are contained in $I$, so (abusing notation) $
v(m') \ge c$ (so $v(x_i) \le 0$) and $v(x_i^2 m') \ge c$ (so $v(x_i) \ge 0$). Consequently, $v(x_i)=0$, a contradiction.

Therefore $F$ is the convex hull of minimal generators, each of which has degree $d$. Hence $F$ is contained in the two distinct hyperplanes $P_{v,c}$ and $P_{\alpha,d}$, contradicting the assumption that it is a facet of $\NP$. 
\end{proof}

\begin{lemma}\label{lem:projection}
The symbolic and Newton polyhedra commute with localization. More precisely, let $I \subseteq S=k[x_1,\dots,x_n]$ be a squarefree monomial ideal with symbolic and Newton polyhedra $\SP$ and $\NP$ respectively. Set $S'=k[x_1,\dots,x_{n-1}]$, and let $\pi: S \to S'$ be the map that evaluates $x_n$ to 1. Denote by $I'$, $\SP'$, and $\NP'$ the ideal $\pi(I)$ and its associated polyhedra. Then if $\pi_{\ast}: \mathbb{R}^n \to \mathbb{R}^{n-1}$ is the natural projection, we have:
\begin{enumerate}
\item $I'=(I:x_n^{\infty})$.
\item $\pi_{\ast}(\SP)=\SP'$.
\item $\pi_{\ast}(\NP)=\NP'$.
\end{enumerate}
\end{lemma}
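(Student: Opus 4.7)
\medskip

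\noindent\textbf{Proof plan for Lemma~\ref{lem:projection}.}

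For part (1), the plan is to work monomial-by-monomial. Since $I$ is a monomial ideal, $\pi(I)$ is the monomial ideal in $S'$ generated by $\{\pi(m):m\in I \text{ monomial}\}$. For a monomial $m=x_n^a m'$ with $m'\in S'$, $\pi(m)=m'$, and $m'\in (I:x_n^\infty)\cap S'$ precisely when $x_n^a m'\in I$ for some $a\ge 0$. The two sides are thus described by the same set of monomials, which will establish (1) under the natural identification of $(I:x_n^\infty)\cap S'$ with $(I:x_n^\infty)$.

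For part (2), the key step will be identifying $\Ass(I')$ from $\Ass(I)$. Writing $I=\bigcap_{P\in\Ass(I)}P$ and applying $(-:x_n^\infty)$, each prime $P$ contributes either $P$ itself (if $x_n\notin P$) or the unit ideal (if $x_n\in P$); this gives $\Ass(I')=\{P\in\Ass(I):x_n\notin P\}$, viewed naturally inside $S'$. With this in hand, the defining inequalities of $\SP'$ from Definition~\ref{d:Symbolicpolyhedron} are precisely the inequalities $v_P\ge 1$ for $P\in\Ass(I)$ with $x_n\notin P$, read in the coordinates $e_1,\dots,e_{n-1}$. The inclusion $\pi_*(\SP)\subseteq \SP'$ is then immediate since these inequalities are a subset of the defining inequalities of $\SP$ and do not involve $e_n$. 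For the reverse inclusion, given $(e_1,\dots,e_{n-1})\in\SP'$ I will lift to $(e_1,\dots,e_{n-1},e_n)$ by taking $e_n$ sufficiently large (say $e_n=1$) so that every inequality $v_P\ge 1$ coming from a prime $P$ containing $x_n$ is automatically satisfied.

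For part (3), the argument is essentially formal: $\NP$ is a convex hull, $\pi_*$ is linear, and convex hulls commute with linear maps, so $\pi_*(\NP)=\mathrm{conv}\{\pi_*(\mathbf{v}):\mathbf{x}^\mathbf{v}\in I\}$. Since $\pi_*$ applied to the exponent vector of a monomial $m\in I$ yields the exponent vector of $\pi(m)\in I'$, and every monomial of $I'$ arises this way by part (1), this set of projected exponent vectors is precisely the set of exponent vectors of monomials in $I'$, whose convex hull is $\NP'$.

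The main obstacle is the $\supseteq$ direction in part (2), and specifically the identification of $\Ass(I')$. Once the associated primes of the localization are pinned down, both (2) and (3) reduce to essentially formal manipulations with half-spaces and convex hulls under the linear projection $\pi_*$.
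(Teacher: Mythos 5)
Your proposal is correct, but your treatment of part (2) takes a genuinely different route from the paper's. The paper proves the ideal-theoretic identity $(I^{(s)}:x_n^{\infty})=(I')^{(s)}$ for every $s$, by checking monomial membership through a chain of equivalences over $\Ass(I)$, with the primes $R$ containing $x_n$ discharged by the observation that $x_n^{s}\in R^{s}$; the polyhedral statement $\pi_{\ast}(\SP)=\SP'$ is then read off via Proposition~\ref{p:Symbolicpoly}. You instead work directly with the half-space description: you identify $\Ass(I')=\{P\in\Ass(I):x_n\notin P\}$ by distributing the saturation over the prime decomposition, get $\pi_{\ast}(\SP)\subseteq\SP'$ for free since those inequalities do not involve $e_n$, and obtain the reverse inclusion by lifting a point of $\SP'$ with $e_n=1$, which kills every inequality coming from a prime containing $x_n$ --- the exact geometric counterpart of the paper's ``$x_n^s\in R^s$'' step. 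Both arguments hinge on the same identification of $\Ass(I')$; what the paper's version buys is the statement $(I')^{(s)}=(I^{(s)}:x_n^{\infty})$ itself, which is of independent interest, while yours is more directly geometric and stays entirely at the level of the polyhedra. Parts (1) and (3) match the paper (which dismisses them as standard and as an immediate consequence of (1), respectively); your convex-hulls-commute-with-linear-maps argument is a fine way to make (3) explicit. Two small points to make airtight in a write-up: the surviving primes are pairwise incomparable (all associated primes of a squarefree monomial ideal are minimal), so none becomes redundant and they are precisely $\Ass(I')$; and the lift with $e_n=1$ uses that points of $\SP'$ have nonnegative coordinates, i.e.\ the first-orthant convention noted in the remark after Definition~\ref{d:Symbolicpolyhedron}.
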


\begin{proof}
Statement (1) is standard, and (3) follows immediately from (1). For (2), it suffices to show that $(I^{(s)}:x_n^{\infty}) = (I')^{(s)}$. Take a monomial $m \in S'$. Then:
\begin{align*}
m \in (I')^{(s)} 
& \Longleftrightarrow m \in P^s \text{ for all } P \in \Ass(I') \\
& \Longleftrightarrow m \in P^s \text{ for all } P \in \{Q \in \Ass(I) : x_n \not \in Q \}\\
& \Longleftrightarrow \text{for large } t, mx_n^{t} \in P^s \text{ for all } P \in \{Q \in \Ass(I) : x_n \not \in Q \} \tag{\dag}\\
& \Longleftrightarrow \text{for large } t, mx_n^{t} \in P^s \text{ for all } P \in \{Q \in \Ass(I) : x_n \not \in Q \} \tag{\ddag}\\  
& \qquad \qquad \quad \text{ and } mx_n^{t} \in P^s \text{ for all } P \in \{R \in \Ass(I) : x_n \in R \}\\
& \Longleftrightarrow \text{for large } t, mx_n^{t} \in P^s \text{ for all } P \in \Ass(I)\\
& \Longleftrightarrow m \in (I^{(s)}:x_n^{\infty}),
\end{align*}

where the equivalence between $(\dag)$ and $(\ddag)$ follows from the fact that $x_n^s\in R^s$. 
\end{proof}

\begin{notation}\label{n:contraction}
  Fix a monomial ideal $I$ and a collection of variables $U$.  Set $x_{U}=\displaystyle\prod_{x_{i}\in U} x_{i}$ and $I_{U}=(I:x_{U}^{\infty})$.  We call $I_{U}$ the \emph{saturation} of $I$ at $U$, or (because it can be attained by setting $x_{i}=1$ for all $x_{i}\in U$) the \emph{contraction} of $I$ at $U$.
\end{notation}

\begin{theorem}\label{t:algorithm}
  Suppose $I$ is a squarefree monomial ideal.  Then we may compute the asymptotic resurgence of $I$ as
  \[
  \rho_{a}(I)=\max\left\{\max_{U}\left\{\rho_{a}(I_{U})\right\}, \max_{v\in V}\left\{v_{a}(I)\right\}\right\},
  \]
  where $U$ runs over all subsets of variables and $V$ consists of the positive definite skew valuations that support defining half-spaces of the Newton polyhedron.  (That is, $V=\{v:v(x_{i})>0 \text{ for all } i \text{ and } P_{v,c}\cap \NP \text{ has dimension } n-1\}$.)
\end{theorem}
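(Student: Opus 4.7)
The plan is to prove the two inequalities in the statement separately. For the forward direction $\rho_{a}(I) \geq \max\{\cdots\}$, each $v \in V$ is by definition a skew valuation supported on $I$ whose associated half-space is a defining half-space of $\NP(I)$, so Corollary~\ref{c:finitemax} gives $v_{a}(I) \leq \rho_{a}(I)$ immediately. The remaining piece is $\rho_{a}(I) \geq \rho_{a}(I_{U})$ for each $U$. I would argue this by lifting noncontainments, using the characterization of $\rho_{a}$ in Remark~\ref{r:IntClosure}: if $I_{U}^{(s)} \not\subseteq \overline{I_{U}^{r}}$, then $I^{(s)} \not\subseteq \overline{I^{r}}$. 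With $\pi : S \to S'$ the evaluation $x_{i} \mapsto 1$ for $x_{i} \in U$, Lemma~\ref{lem:projection}(1) gives $\pi(I^{(s)}) = I_{U}^{(s)}$, and Lemma~\ref{lem:projection}(3) combined with Proposition~\ref{p:Polyhedronclosure} yields $\pi(\overline{I^{r}}) \subseteq \overline{I_{U}^{r}}$ (because $\pi_{\ast}$ maps $\NP(I)$ onto $\NP(I_{U})$). Contrapositively, $I^{(s)} \subseteq \overline{I^{r}}$ would force $I_{U}^{(s)} \subseteq \overline{I_{U}^{r}}$.

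For the reverse inequality, by Corollary~\ref{c:finitemax} there exists a skew valuation $v$ supported on $I$, corresponding to a defining half-space of $\NP(I)$, and realizing $\rho_{a}(I) = v_{a}(I)$. If $v$ is positive definite then $v \in V$ and we are done. Otherwise set $U = \{x_{i} : v(x_{i}) = 0\} \neq \emptyset$. The key claim is that $v_{a}(I) = v_{a}(I_{U})$, which together with Theorem~\ref{t:squarefreeresurgence} applied to $I_{U}$ (where $v$, restricted to the remaining coordinates, is a skew valuation supported on $I_{U}$) gives $v_{a}(I) = v_{a}(I_{U}) \leq \rho_{a}(I_{U}) \leq \max_{U} \rho_{a}(I_{U})$, as desired.

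To establish the key claim, I would verify $v(I) = v(I_{U})$ and $\widehat{v}(I) = \widehat{v}(I_{U})$ separately. For the first, note that since $v$ vanishes on the $U$-variables, $v(m) = v(\pi(m))$ for every monomial $m$; combined with Lemma~\ref{lem:projection}(1), which shows that generators of $I$ project onto generators of $I_{U}$ (any extra $x_{U}$-factors being invisible to $v$), this yields the equality. For the second, Lemma~\ref{l:hyperplanes}(4) rewrites $\widehat{v}(I)$ as the minimum of $v$ over $\SP(I)$, while Lemma~\ref{lem:projection}(2) asserts $\pi_{\ast}(\SP(I)) = \SP(I_{U})$; since $v$ is constant along fibers of $\pi_{\ast}$, the two linear-programming minima coincide.

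The main obstacle is the key claim $v_{a}(I) = v_{a}(I_{U})$: conceptually $v$ is blind to the $U$-variables, but rigorously the two separate equalities $v(I) = v(I_{U})$ and $\widehat{v}(I) = \widehat{v}(I_{U})$ must be extracted from Lemmas~\ref{lem:projection} and~\ref{l:hyperplanes}. Everything else amounts to bookkeeping with previously established consequences of the integral-closure characterization of $\rho_{a}$.
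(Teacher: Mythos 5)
Your proposal is correct, and half of it is essentially the paper's own argument: for the inequality $\rho_{a}(I)\le R$ you, like the paper, take a valuation realizing $\rho_{a}(I)$, observe it lies in $V$ when positive definite, and otherwise pass to the contraction at $U=\{x_i:v(x_i)=0\}$ via the key claim $v(I)=v(I_U)$ and $\widehat{v}(I)=\widehat{v}(I_U)$; you are in fact slightly more careful than the paper in invoking Corollary~\ref{c:finitemax} so that the optimizing valuation already corresponds to a defining half-space, which is what membership in $V$ actually requires. Where you genuinely diverge is the step $\rho_{a}(I_U)\le\rho_{a}(I)$. The paper stays in the language of valuations: it picks $w$ with $w_{a}(I_U)=\rho_{a}(I_U)$, assumes without loss of generality that $w$ vanishes on $U$, and concludes $\rho_{a}(I_U)=w_{a}(I)\le\rho_{a}(I)$. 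You instead lift noncontainments: using the saturation description of $I_U^{(s)}$ (this is really the identity $(I^{(s)}:x_U^{\infty})=(I_U)^{(s)}$ from the proof of Lemma~\ref{lem:projection}(2), not part (1) as cited) together with Proposition~\ref{p:Polyhedronclosure} and Lemma~\ref{lem:projection}(3), you show $I^{(s)}\subseteq\overline{I^{r}}$ forces $I_U^{(s)}\subseteq\overline{I_U^{r}}$, and then apply the characterization $\rho_{a}=\overline{\rho}$ of Remark~\ref{r:IntClosure}. Both routes are sound. Yours buys a way around the paper's tacit normalization of $w$ (one should check that zeroing the $U$-coefficients of an optimal valuation for $I_U$ leaves $w(I_U)$ unchanged and can only increase $w_{a}(I_U)$, so it remains optimal), at the cost of some bookkeeping about whether symbolic powers and integral closures are taken in $S$ or in $S'=\kk[x_i:x_i\notin U]$; the paper's version is shorter and keeps both directions of the proof in the single framework of skew valuations and the projection $\pi_{\ast}$.
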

\begin{proof}
  Let $R$ be the value on the right-hand side.  Recall that $\rho_{a}(I)$ is the maximum, over all skew valuations, of $v_{a}(I)$.

  To show that $\rho_{a}(I)\leq R$, let $w$ be a skew valuation with $w_{a}(I)=\rho_{a}(I)$.  If $w$ is positive definite, we have $w\in V$ so $\rho_{a}(I)\leq R$.  If $w$ is not positive definite, set $U$ equal to the set of all $x_{i}$ with $w(x_{i})=0$.  Let $\pi$ be the projection map on exponent vectors as in Lemma \ref{lem:projection}.  Then we have $w(\mathbf{v})=w(\pi(\mathbf{v}))$ for all $\mathbf{v}$.  It follows that $w(I_{U})=w(I)$, $\widehat{w}(I_{U})=\widehat{w}(I)$, and $w_{a}(I_{U})=w_{a}(I)$.  Consequently, $\rho_{a}(I_{U})\geq w_{a}(I_{U})=\rho_{a}(I)$ and so $R\geq \rho_{a}(I)$. 

  To show that $\rho_{a}(I)\geq R$, we must show that $\rho_{a}(I_{U})\leq \rho_{a}(I)$ for all $U$.  Fix a $U$ and a skew valuation $w$ such that $w_{a}(I_{U})=\rho_{a}(I_{U})$.  Without loss of generality, we may assume that $w(x_{i})=0$ for all $x_{i}\in U$.  Consequently, $w(\mathbf{v})=w(\pi(\mathbf{v}))$ for all exponent vectors $v$, so $w_{a}(I)=w_{a}(I_{U})$.  We conclude that $\rho_{a}(I_{U})=w_{a}(I_{U})=w_{a}(I)\leq \rho_{a}(I)$ as desired.
\end{proof}

\begin{corollary}\label{c:lowdimensionalNewtonpolytope}
  Let $I$ be a squarefree monomial ideal.  Set $\rho_{c}(I)$ equal to the maximum of the asymptotic resurgences of its contractions, $\rho_{c}(I)=\max\{\rho_{a}(I_{U}): U\neq \varnothing\}$.  Denote by $L$ the affine span of the (exponent vectors of the) generators of $I$.  Then
  \begin{enumerate}
  \item Suppose $\dim L=n-1$.  Let $v$ be a nonzero valuation that is constant on $L$.  Then $\rho_{a}(I)=\max\{v_{a}(I),\rho_{c}(I)\}$.
  \item If $\dim L<n-1$ or, more generally, all facets of $NP(I)$ are parallel to a coordinate axis, then $\rho_{a}(I)=\rho_{c}(I)$.
  \end{enumerate}
\end{corollary}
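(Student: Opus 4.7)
The plan is to apply Theorem~\ref{t:algorithm}, which writes $\rho_{a}(I)=\max\{\rho_{c}(I),\max_{v\in V} v_{a}(I)\}$, where $V$ is the set of positive definite valuations supporting defining half-spaces of $\NP(I)$. The key geometric observation to set up first is that, since $\NP(I)$ is the Minkowski sum of a bounded polytope with the first orthant, every defining functional $w$ of a facet of $\NP(I)$ satisfies $w(x_{i})\ge 0$ for all $i$, and the facet $P_{w,c}\cap\NP(I)$ is parallel to the $x_{i}$-axis precisely when $w(x_{i})=0$. Up to positive rescaling (which does not affect $v_{a}$), $V$ is therefore in bijection with the facets of $\NP(I)$ that are not parallel to any coordinate axis.

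For part (1), with $\dim L=n-1$ the generators lie in a unique hyperplane $P_{v,c}$, and by Proposition~\ref{p:parallelfacet} the only candidate facet of $\NP(I)$ not parallel to a coordinate axis is $P_{v,c}\cap\NP(I)$. If $v$ is positive definite, this intersection is $(n-1)$-dimensional (hence a facet), so $V=\{v\}$ up to scaling and Theorem~\ref{t:algorithm} gives the formula directly. If $v$ is only positive semi-definite, set $U=\{x_{i}:v(x_{i})=0\}\neq\emptyset$; then $v\notin V$, but the localization step from the proof of Theorem~\ref{t:algorithm} yields $v_{a}(I)=v_{a}(I_{U})\le\rho_{a}(I_{U})\le\rho_{c}(I)$, so the stated formula $\rho_{a}(I)=\max\{v_{a}(I),\rho_{c}(I)\}$ consistently reduces to $\rho_{c}(I)$.

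For part (2), the more general hypothesis that every facet of $\NP(I)$ is parallel to a coordinate axis forces $V=\emptyset$, so Theorem~\ref{t:algorithm} immediately gives $\rho_{a}(I)=\rho_{c}(I)$. To deduce this hypothesis from $\dim L<n-1$, I would pick two linearly independent functionals $v_{1},v_{2}$ constant on $L$ with values $c_{1},c_{2}$. Applying Proposition~\ref{p:parallelfacet} separately to $(v_{1},c_{1})$ and $(v_{2},c_{2})$ forces any facet $F$ of $\NP(I)$ not parallel to a coordinate axis to lie in both $P_{v_{1},c_{1}}$ and $P_{v_{2},c_{2}}$, whose intersection has codimension two, contradicting $\dim F=n-1$.

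The hard part will be cleanly establishing the bijection in the first paragraph and carefully handling the positive semi-definite case in (1); once those pieces are in place, each part is a short consequence of Proposition~\ref{p:parallelfacet} and Theorem~\ref{t:algorithm}.
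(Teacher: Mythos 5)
Your argument is correct and is essentially the paper's proof: both rest on Theorem~\ref{t:algorithm} combined with Proposition~\ref{p:parallelfacet} to identify the set $V$ as the single valuation $v$ (up to positive scaling) in part (1) and as empty in part (2). Your additional care with the case where $v$ is only positive semidefinite (showing $v_{a}(I)\le \rho_{c}(I)$ via the localization step from the proof of Theorem~\ref{t:algorithm}) fills in a detail the paper leaves as an observation, but the route is the same.
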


\begin{remark}
It can certainly happen that many facets of the Newton polyhedron of $I$ are not parallel to any coordinate axis, in which case Corollary \ref{c:lowdimensionalNewtonpolytope} tells us nothing.  However, there are many cases of interest which satisfy one of the hypotheses of Corollary~\ref{c:lowdimensionalNewtonpolytope}, as we will see in the examples.
\end{remark}

\begin{proof}
We use Proposition~\ref{p:parallelfacet}.

  For statement (1), observe that $V$ consists of the single valuation $v$.

  For statement (2), observe that $V$ is empty.
\end{proof}

\begin{example}\label{ex:Jt}	
  Let $I=( x_0x_n,x_1x_n,\ldots,x_{n-1}x_n,x_0x_1\cdots x_{n-1})$.  To illustrate the utility of Corollary~\ref{c:lowdimensionalNewtonpolytope}, we prove that
  $\rho_a(I)=\frac{n^2}{n^2-n+1}$.  (In fact, it is straightforward to prove that $I$ is normal (that is, $\overline{I^{r}}=I^{r}$ for all $r$), so by Corollary~\ref{cor:rhoa=rho}, we have $\rho(I)=\frac{n^{2}}{n^{2}-n+1}$ as well.)  This was proven in~\cite[Theorem~7.14]{Waldschmidt16}.

Any contraction of $I$ will yield a monomial complete intersection, so the maximum of $\rho(I_U)$ taken over nonempty subsets of the variables is $1$.  Notice that the Newton polytope of $I$ is $(n-1)$-dimensional, lying in the plane with equation
\[
x_0+\cdots+x_{n-1}+(n-1)x_n=n.
\]
Let $v=x_0+\cdots+x_{n-1}+(n-1)x_n$.  By Corollary~\ref{c:lowdimensionalNewtonpolytope}, $\rho_a(I)=\rho_v(I)$.

Clearly $v(I)=n$.  We compute $\widehat{v}(I)$.  By Lemma~\ref{l:hyperplanes}, $\widehat{v}(I)=\min\{v(x):x\in\SP(I)\}$. $I$ has primary decomposition
\[
I=( x_0,x_n)\cap( x_1,x_n)\cap\cdots\cap( x_{n-1},x_n)\cap( x_0,\ldots,x_{n-1}).
\]

So we must minimize $\widehat{v}=x_0+\cdots+x_{n-1}+(n-1)x_n$ subject to the defining inequalities of $\SP(I)$, namely $x_{i}+x_{n}\geq 1$ for all $i\neq n$ and $x_{0}+\dots+x_{n-1}\geq 1$.  The skew Waldschmidt constant $\widehat{v}(I)$ is this minimum, namely $\frac{n^{2}-n+1}{n}$, attained at the point $(\frac{1}{n},\dots, \frac{1}{n},\frac{n-1}{n})$.  

Corollary  \ref{c:lowdimensionalNewtonpolytope} computes the asymptotic resurgence of $I$,
\[
\rho_a(I)=\frac{v(I)}{\widehat{v}(I)}=\frac{n^2}{n^2-n+1}.
\]
\end{example}

\begin{remark}\label{r:nonsquarefree}
  Although we have proven Theorem~\ref{t:algorithm} and Corollary~\ref{c:lowdimensionalNewtonpolytope} for squarefree monomial ideals, we remark that these results also hold for arbitrary monomial ideals without embedded primes. The following example illustrates this for a geometrically meaningful ideal.
\end{remark}

\begin{example}\label{ex:cremona}
  Let $I=( x^d,x^{d-1}y,y^{d-1}z)$.  This is the base ideal of a degree $d$ Cremona map from $\mathbb{P}^2$ to $\mathbb{P}^2$; symbolic powers of such ideals are studied in~\cite{CSR14}.  It satisfies $I^i=I^{(i)}$ for all $i<d$ but $I^{d}\neq I^{(d)}$ (see~\cite[Example~5.3]{A16} for details).  $I$ is Cohen-Macaulay with primary decomposition
\[
I=( x^d,x^{d-1}y,y^{d-1})\cap( x^{d-1},z).
\]
The Newton polytope of $I$ lies in the plane $x+y+z=d$ (with the obvious abuse of notation), and the contractions of $I$ are either $( x,y)$-primary or $( x,z)$-primary (hence $\rho_a(I_U)=1$ for every $U\subset \{x,y,z\}$).  Hence by Corollary~\ref{c:lowdimensionalNewtonpolytope}, $\rho_a(I)=\alpha_{a}(I)$.

The symbolic polyhedron $\SP(I)$ is given by the coordinate inequalities $x\geq 0$, $y\geq 0$, $z\geq 0$ and the more interesting inequalities 
\[
\begin{array}{rl}
(d-1)x+dy & \ge d(d-1)\\
x+(d-1)z & \ge d-1.\\
\end{array}
\]
Observe that these inequalities correspond in a natural way to the primary components, but the specifics are somewhat opaque.  For example, $x^{d-1}y$ is a generator of the $(x,y)$-primary component but is on the interior of the corresponding inequality.

By Lemma~\ref{l:hyperplanes}, $\widehat{\alpha}(I)$ is the minimum value of $x+y+z$ on $\SP$.  Some arithmetic yields 
$\walpha(I)=\frac{d^2-1}{d}$, so
\[
\rho_a(I)=\frac{\alpha(I)}{\walpha(I)}=\frac{d^2}{d^2-1}.
\]
Notice that $1<\rho_a(I)$, but we must take $t\ge d$ before we see the containment failure $I^{(t)}\not\subset \overline{I^t}$ guaranteed by Lemma~\ref{lem:rhoainequalities}.  Computational experiments suggest that $t=d$ usually suffices.
\end{example}

\section{Counterexamples and special cases}\label{s:Examples}
	 
	We begin this section with three examples of squarefree monomial ideals whose resurgence and asymptotic resurgence are not equal.  We then study the asymptotic resurgence of edge ideals (that is, squarefree monomial ideals generated by quadrics) and show that in this special case it depends only on the Waldschmidt constant.

	\begin{example}\label{e:Fano}
		Let $I=(abd,bce,cdf,aef,acg,deg,bfg)$ be the ideal whose generators correspond to the lines of the Fano plane.
		
		We compute that $\rho_{a}(I_{U})=1$ for all nontrivial $U\subseteq \{a,b,c,d,e,f,g\}$ (this also follows because $I$ is minimally non-Fulkersonian; see Remark~\ref{r:mfmc}).  Since $I$ is equigenerated, Corollary \ref{c:lowdimensionalNewtonpolytope} tells us $\rho_{a}(I)=\alpha_{a}(I)=\frac{\alpha(I)}{\widehat{\alpha}(I)}$.  We compute $\widehat{\alpha}(I)=\frac{7}{3}$ and $\alpha(I)=3$, so $\rho_{a}(I)=\frac{9}{7}$.  On the other hand, we have $abcdefg\in I^{(3)}\smallsetminus I^{2}$, so $\rho(I)\geq \frac{3}{2}$.
		
		Computing the resurgence is more delicate and requires results from Section~\ref{s:AsymptoticResurgence}.  We can verify computationally that the integral closure of the Rees algebra, $\overline{R[It]}=S\oplus \overline{I}\oplus \overline{I^2}\oplus \cdots$, is generated as a module over $R[It]$ by $\overline{I^2}$.  Hence $\overline{I^{r+1}}=I^{r-1}\overline{I^2}$ for $r\ge 1$.  Since $I$ is integrally closed, $\overline{I^2}\subset I$, hence $\overline{I^{r+1}}=I^{r-1}\overline{I^2}\subseteq I^{r}$ for $r\ge 2$.
		
		Using Proposition~\ref{prop:rhoupperbound}, we see that 
		\[
		\rho(I)\le\max\limits_{r\ge 1}\left\lbrace\frac{\lceil\rho_a(I)(r+1)\rceil-1}{r}\right\rbrace=\max\limits_{r\ge 1}\left\lbrace\frac{\lceil\frac{9}{7}(r+1)\rceil-1}{r}\right\rbrace,
		\]
		which is at most $\frac{3}{2}$ except when $r=3$.  However, we can check directly that $I^{(5)}\subset I^3$.  It follows that $\rho(I)=\frac{3}{2}$.  
	\end{example}
	
	\begin{example}\label{e:truncatedK5}
		Let $G$ be the hypergraph with vertices corresponding to the edges of the complete graph $K_{5}$ and hypergraph edges corresponding to the triangles in $K_{5}$.  The edge ideal of $G$ is $T=(abc,ade,bdf,cef,agh,bgi,chi,dgj,ehj,fij)$.
		
		Reasoning as in Example \ref{e:Fano}, we compute $\rho_{a}(T)=\alpha_{a}(T)=\frac{6}{5}$, but $\rho(T)\ge\frac{4}{3}$.  (In this case, the product of the variables $abcdefghij$ is contained in $T^{(4)}$ but not $T^{3}$.)
	\end{example}
	
	\begin{example}\label{e:truncatedK5dual} 
		Let $G$ be the hypergraph with vertices corresponding to the edges of the complete graph $K_{5}$ and hypergraph edges corresponding to the triangles in $K_{5}$, as in Example \ref{e:truncatedK5}, let $G^{\vee}$ be its Alexander dual.  Let \[T^{*}=(cefg, bdfh, afgh, adei, begi, cdhi, abcj, cdgj, behj, afij).\]  This is the ideal generated by the degree four generators of the edge ideal of $G^{\vee}$.  Reasoning once again as in Example \ref{e:Fano}, we compute $\rho_{a}(T^{*})=\frac{6}{5}$ and $\rho(T^{*})\ge\frac{3}{2}$.
	\end{example}
	
	\begin{remark}\label{r:mfmc}
		The three ideals of Examples \ref{e:Fano}, \ref{e:truncatedK5}, and \ref{e:truncatedK5dual} are known pathologies in the combinatorial optimization literature.  See \cite{C01} for details; we explain very briefly, abusing some notation, below.
		
		A squarefree monomial ideal $I$ (or more precisely its corresponding hypergraph) satisfies the \emph{max flow-min cut} (MFMC) property if $I^{(s)}=I^{s}$ for all $s$, and it satisfies the Fulkersonian property (or \textit{rounding up} property) if $I^{(s)}=\overline{I^{s}}$ for all $s$.  
		By Corollary~\ref{cor:rhoa1}, a squarefree monomial ideal is Fulkersonian if and only if its asymptotic resurgence is equal to $1$.
		
		The MFMC property is conjectured to be equivalent to another property, the \emph{packing property}, which is defined in terms of minors (that is, an ideal satisfies the packing property if it satisfies certain conditions after arbitrary subsets of the variables are set to zero or one); see \cite{C01} for more discussion of the packing property and the conjectured equivalence.  We say that an ideal is \emph{minimally non-packing} if it is not packed but every proper minor is.  Analogously, we say that an ideal is \emph{minimally non-Fulkersonian} if it is not Fulkersonian but every proper minor is.

		According to the remarks preceding \cite[Theorem 4.18]{C01}, every known hypergraph which is minimally non-Fulkersonian but not minimally non-packing contains one of the three hypergraphs from Examples \ref{e:Fano}, \ref{e:truncatedK5}, and \ref{e:truncatedK5dual} as a sub-hypergraph.
	\end{remark}
	
	In view of the strong relationship between asymptotic resurgence and combinatorial optimization described in Lemma \ref{l:hyperplanes}, we speculate that the two types of pathologies may be closely related.  
		
	\begin{question}\label{q:whynot}
	Suppose $I$ is minimally non-Fulkersonian but not minimally non-packing.  Is it the case that $\rho(I)\neq \rho_{a}(I)$? 
	\end{question}
	
	The converse to Question~\ref{q:whynot} is false; the edge ideal of the finite projective plane $\mathbb{P}^2(\mathbb{F}_3)$ also has asymptotic resurgence not equal to its resurgence.  In view of this, we ask the following question.
	
	\begin{question}\label{q:FiniteProjectivePlanes}
		Is the asymptotic resurgence of the edge ideal of a finite projective plane always different from its resurgence?
	\end{question}
	
	\begin{remark}
		Let $I_q$ be the edge ideal of the finite projective plane $\mathbb{P}^2(\mathbb{F}_q)$, where $q$ is a prime power.  One can show that the product of the variables is in $I_q^{(q+1)}$ but not in $I_q^2$, so $\rho(I_q)\ge\frac{q+1}{2}$.  It is also straightforward to show that
		$\walpha(I_q)=\frac{q^2+q+1}{q+1}$, so $\alpha_a(I_q)=\frac{(q+1)^2}{q^2+q+1}$.  However, the Waldschmidt constant does not always determine the asymptotic resurgence (for example, if $q=3$).  In any case, it seems reasonable to conjecture that $\rho(I_q)-\rho_a(I_q)\to\infty$ as $q\to\infty$.
	\end{remark}
	
	The above examples show that the asymptotic resurgence can be strictly smaller than the resurgence even for very well-behaved ideals.  A natural question is whether there is any large class of ideals (for instance, edge ideals of graphs) for which the asymptotic resurgence is equal to the resurgence.  The best result we can currently prove, even for edge ideals, is Corollary~\ref{cor:rhoa=rho} (which holds for arbitrary ideals); if an ideal is \textit{normal} then the asymptotic resurgence and resurgence are equal.
	
	The normality of edge ideals of graphs has been completely classified in~\cite{OH98,SVV98}.  We say a simple graph $G$ satisfies the \textit{odd cycle condition} if $G$ has no induced subgraph which consists of two disjoint cycles of odd length.  Then the edge ideal $I=I(G)$ is normal if and only if $G$ satisfies the odd cycle condition.  Hence, by Corollary~\ref{cor:rhoa=rho}, $\rho_a(I)=\rho(I)$ if $G$ satisfies the odd cycle condition.
	
	\begin{remark}
	Normality for more general squarefree ideals is quite difficult; see~\cite{HL15} for a discussion of this problem.
	\end{remark}
	
	We now show that the asymptotic resurgence of the edge ideal of a graph can be computed from the Waldschmidt constant.  We require some combinatorial tools, and a lemma relating containment of ideals to their symbolic powers.
        
	\begin{definition}\label{l:graphbasics}
		For a (hyper)graph $G$ and a collection of vertices $U$, recall that the \emph{link} of $U$ is $\link(U)=\{x_{i}\not\in U:x_{i}x_{j}\text{ is an edge for some $x_{j}\in U$}\}$, and the \emph{star} of $U$ is $\STAR(U)=U\cup \link(U)$.  Finally, a set of vertices $V$ is called a \emph{vertex cover} for $G$ if it intersects every edge.  The minimal vertex covers of $G$ are precisely the associated primes of the edge ideal $I(G)$.
	\end{definition}
	
	\begin{lemma}\label{l:obvo}
	  If $I$ and $J$ are squarefree monomial ideals and $I\subset J$, then $I^{(s)}\subset J^{(s)}$.  In particular, $\walpha(I)\ge \walpha(J)$.
        \end{lemma}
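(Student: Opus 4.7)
The plan is to exploit Theorem~\ref{t:whysquarefree}, which expresses the symbolic power of a squarefree monomial ideal as $I^{(s)} = \bigcap_{P \in \Ass(I)} P^s$. Using this, the containment $I^{(s)} \subset J^{(s)}$ reduces to a statement purely about the associated primes.

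The key observation is that every $Q \in \Ass(J)$ contains some $P \in \Ass(I)$. Indeed, since $J$ is squarefree it has no embedded primes, so $Q$ is a minimal prime of $J$. From $I \subset J \subset Q$ we conclude that the prime $Q$ contains $I$ and hence contains one of the minimal primes of $I$, i.e., some $P \in \Ass(I)$. From $P \subseteq Q$ we get $P^s \subseteq Q^s$, and so
\[
I^{(s)} = \bigcap_{P' \in \Ass(I)} (P')^s \subseteq P^s \subseteq Q^s.
\]
Taking the intersection over all $Q \in \Ass(J)$ gives $I^{(s)} \subseteq \bigcap_{Q \in \Ass(J)} Q^s = J^{(s)}$, as desired.

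For the Waldschmidt inequality, any polynomial realizing the minimum degree in $I^{(s)}$ lies in $J^{(s)}$ by the containment just proved, so $\alpha(I^{(s)}) \ge \alpha(J^{(s)})$. Dividing by $s$ and letting $s \to \infty$ yields $\widehat{\alpha}(I) \ge \widehat{\alpha}(J)$.

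There is no real obstacle here; the entire argument is a short application of the primary decomposition of squarefree ideals and the standard fact that a prime containing an ideal contains one of its minimal primes.
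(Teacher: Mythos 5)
Your proof is correct. The containment argument is sound: since $J$ is squarefree each $Q\in\Ass(J)$ is prime and contains $J\supseteq I$, hence contains some minimal prime $P$ of $I$, and because $I$ is squarefree $P\in\Ass(I)$; then $I^{(s)}=\bigcap_{P'\in\Ass(I)}(P')^{s}\subseteq P^{s}\subseteq Q^{s}$, and intersecting over $Q$ gives $I^{(s)}\subseteq J^{(s)}$ via Theorem~\ref{t:whysquarefree}. The Waldschmidt inequality then follows exactly as you say. Your route differs from the paper's in packaging rather than substance: the paper phrases the same combinatorial fact in the language of hypergraphs and polyhedra --- every (minimal) vertex cover of the hypergraph of $J$ is a vertex cover of the hypergraph of $I$, so every defining half-space $H_{v_{\mathcal{C}},1}$ of $\SP(J)$ contains $\SP(I)$, giving $\SP(I)\subseteq\SP(J)$ and hence the containment of symbolic powers via Proposition~\ref{p:Symbolicpoly}. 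Your version is more elementary (no polyhedra needed) and makes transparent exactly where the squarefree hypothesis enters, namely in $\Ass=\mathrm{Min}$ and in the formula of Theorem~\ref{t:whysquarefree}; this matters because, as the paper notes, the lemma fails for general ideals. The paper's version buys the slightly stronger geometric statement $\SP(I)\subseteq\SP(J)$, which fits the polyhedral framework used throughout Section~\ref{s:squarefree}, but for the lemma as stated both arguments are complete and rest on the same key observation.
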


        Lemma \ref{l:obvo} seems obvious, but does not hold in general.  However, for squarefree monomial ideals it follows from careful consideration of the symbolic polyhedron.

        \begin{proof}[Proof of Lemma \ref{l:obvo}]
          Let $G$ and $H$ be the hypergraphs associated to $I$ and $J$ respectively. We first note that any vertex cover of $H$ is also a vertex cover of $G$. This follows from the fact that every edge of $G$ contains an edge of $H$.
          For an arbitrary vertex cover $\mathcal{C}$ of $H$, define the skew valuation $v_{\mathcal{C}}$ by $\displaystyle v_{\mathcal{C}}(\prod x_{i}^{e_{i}})=\sum_{x_{i}\in \mathcal{C}}e_{i}$.
          The defining half-spaces of the symbolic polyhedron of $J$ all have the form $H_{v_{\mathcal{C}},1}$ for some $\mathcal{C}$, and each such half-space contains the symbolic polyhedron of $I$.  Consequently, the symbolic polyhedron of $I$ is contained in the symbolic polyhedron of $J$.
        \end{proof}

	\begin{theorem}\label{t:graphs}
		Suppose $I=I(G)$ is the edge ideal of a graph $G$.  Then the asymptotic resurgence of $I$ is given by $\rho_{a}(I)=\frac{2}{\widehat{\alpha}(I)}$, where $\widehat{\alpha}(I)$ is the Waldschmidt constant.
	\end{theorem}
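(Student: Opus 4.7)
The plan is to establish the matching bounds $\rho_a(I) \le 2/\widehat{\alpha}(I)$ and $\rho_a(I) \ge 2/\widehat{\alpha}(I)$. The lower bound is immediate from the Bocci--Harbourne inequality $\alpha(I)/\widehat{\alpha}(I) \le \rho_a(I)$ together with $\alpha(I) = 2$. For the upper bound I would apply Theorem~\ref{t:algorithm} to $I$ itself: since $I$ is equigenerated in degree two, Proposition~\ref{p:parallelfacet} forces $V \subseteq \{\alpha\}$ (up to scaling), so the skew contribution $\max_{v \in V} v_a(I)$ is at most $\alpha(I)/\widehat{\alpha}(I) = 2/\widehat{\alpha}(I)$. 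What remains is to show $\rho_a(I_U) \le 2/\widehat{\alpha}(I)$ for every nonempty $U \subseteq V(G)$; I would prove this by strong induction on $|V(G) \setminus U|$, with the trivial base case $U = V(G)$.

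For the induction step, fix a nonempty $U$ and set $L = \link(U)$, and let $H$ be the induced subgraph of $G$ on $V(G) \setminus \STAR(U)$. A primary-decomposition computation shows that minimal vertex covers of $G$ disjoint from $U$ are precisely the sets $L \cup P$ with $P$ a minimal vertex cover of $H$, giving $I_U = (L) + I(H)$ and $\Ass(I_U) = \{L \cup P : P \in \Ass(I(H))\}$. All generators of $I_U$ lie on the hyperplane $P_{v_U,2}$, where $v_U$ is the positive-definite skew valuation with $v_U(x_\ell) = 2$ for $\ell \in L$ and $v_U(x_i) = 1$ for $i \in V(H)$. The generalized form of Proposition~\ref{p:parallelfacet} identifies $v_U$ as the only positive-definite skew valuation that can support a defining facet of $\NP(I_U)$, so Theorem~\ref{t:algorithm} applied to $I_U$ specializes to
\[
\rho_a(I_U) = \max\left\{\max_{\emptyset \ne V' \subseteq V(G) \setminus U} \rho_a(I_{U \cup V'}),\ (v_U)_a(I_U)\right\}.
\]
Each ideal $I_{U \cup V'}$ satisfies $|V(G) \setminus (U \cup V')| < |V(G) \setminus U|$, so the inductive hypothesis handles the first term and only the skew resurgence $(v_U)_a(I_U)$ remains.

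The key lemma is that $\widehat{v_U}(I_U) \ge \widehat{\alpha}(I)$; combined with $v_U(I_U) = 2$ (by construction, since every generator lies on $P_{v_U,2}$) this gives $(v_U)_a(I_U) \le 2/\widehat{\alpha}(I)$. I would compute $\widehat{v_U}(I_U)$ as the minimum of $v_U$ on $\SP(I_U)$ and pass to the LP dual, which asks for the maximum of $\sum_P \lambda_P$ over $\lambda_P \ge 0$ subject to $\sum_{P : i \in P} \lambda_P \le 1$ for each $i \in V(H)$ (coming from the $V(H)$-variables, whose $v_U$-coefficients are $1$) and a single extra constraint $\sum_P \lambda_P \le 2$ (coming from the $L$-variables, whose $v_U$-coefficients are $2$). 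The $V(H)$-constraints alone form the dual LP computing $\widehat{\alpha}(I(H))$, so strong duality yields $\widehat{v_U}(I_U) = \min\bigl(2,\ \widehat{\alpha}(I(H))\bigr)$, with the natural convention that $\widehat{v_U}(I_U) = 2$ when $H$ has no edges. Finally $\widehat{\alpha}(I(H)) \le \alpha(I(H)) = 2$ holds automatically, and Lemma~\ref{l:obvo} applied to the containment $I(H) \subseteq I$ (which comes from $E(H) \subseteq E(G)$) gives $\widehat{\alpha}(I(H)) \ge \widehat{\alpha}(I)$; since also $\widehat{\alpha}(I) \le 2$, we conclude $\widehat{v_U}(I_U) \ge \widehat{\alpha}(I)$, completing the induction.

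The main obstacle is the LP-duality computation of $\widehat{v_U}(I_U)$: one has to see that the linear generators in $(L) \subseteq I_U$ contribute only the single dual constraint $\sum_P \lambda_P \le 2$, and that this constraint is harmless because $\widehat{\alpha}(I(H)) \le 2$ automatically. Once the clean identity $\widehat{v_U}(I_U) = \widehat{\alpha}(I(H))$ is in hand, the key lemma collapses to Lemma~\ref{l:obvo} and everything else is bookkeeping through Theorem~\ref{t:algorithm} and Proposition~\ref{p:parallelfacet}.
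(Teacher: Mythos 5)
Your proposal is correct, and its skeleton (reduce via Theorem~\ref{t:algorithm}/Proposition~\ref{p:parallelfacet}, decompose the contraction as $I_U=(\link(U))+I(H)$, finish with Lemma~\ref{l:obvo}) matches the paper's; the substantive difference is how the contractions are disposed of. The paper inducts on the number of edges and invokes \cite[Lemma~7.4]{Waldschmidt16} to strip off the variable generators, getting $\rho_a(I_U)=\rho_a(I(H))$ and then applying the inductive hypothesis to the smaller graph. You instead analyze $I_U$ directly through the skew valuation $v_U$ (value $2$ on $\link(U)$, value $1$ off $\STAR(U)$) and prove, via Lemma~\ref{l:hyperplanes}(4) and LP duality, the identity $\widehat{v_U}(I_U)=\min\bigl(2,\widehat{\alpha}(I(H))\bigr)=\widehat{\alpha}(I(H))$, so that $(v_U)_a(I_U)\le 2/\widehat{\alpha}(I)$ follows from Lemma~\ref{l:obvo} alone; your induction on $|V(G)\setminus U|$ only manages nested contractions. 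This is essentially the alternative route the paper sketches in the remark after its proof (where exactly your valuations $v_U$ appear), and it buys self-containedness: no citation of \cite[Lemma~7.4]{Waldschmidt16} is needed, at the cost of the duality computation. Two small points to tidy up: (i) when you apply Theorem~\ref{t:algorithm} to $I_U$ you are implicitly regarding $I_U$ in the polynomial ring on the variables $V(G)\setminus U$ it actually involves (otherwise $v_U$ is not positive definite and the contraction at subsets of $U$ returns $I_U$ itself); this identification is harmless and is the one the paper makes in Notation~\ref{n:contraction}, but say it. (ii) Contractions at sets $U$ containing an edge of $G$ (including $U=V(G)$ when $G$ has an edge) give the unit ideal, for which your decomposition and $\rho_a$ are vacuous; these should simply be excluded from the maximum, the same convention the paper tacitly uses in Theorem~\ref{t:algorithm}.
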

	\begin{proof} 
		We induct on the number of edges.  If there is only one edge, $I$ is principal and has $\rho_{a}(I)=1$ and Waldschmidt constant $\widehat{\alpha}(I)=2$ as desired.
		
		In general, observe that the exponent vectors of the generators of $I$ all satisfy $\alpha(\mathbf{v})=2$, where $\alpha$ is the degree valuation.  Consequently, by Corollary \ref{c:lowdimensionalNewtonpolytope} we have either $\rho_{a}(I) = \alpha_{a}(I)= \frac{\alpha(I)}{\widehat{\alpha}(I)}$ or $\rho_{a}(I)=\rho_{a}(I_{U})$ for some proper subset of variables $U$.  We must show that $\alpha_{a}(I)\geq \rho_{a}(I_{U})$ for all proper $U$.  
		
		Observe that $I_{U}=J+I(G')$, where $J=\link(U)$ and $G'$ is the induced subgraph on $V=\{x_{i}:x_{i}\not\in \STAR(U)\}$.  Since $J$ is generated by variables, we may conclude from \cite[Lemma~7.4]{Waldschmidt16} that $\rho_{a}(I_U)=\rho_{a}(I(G'))$, which is equal to $\alpha_{a}(I(G'))$ by induction.  Since $I(G')\subset I$, we have $\widehat{\alpha}(I(G'))\geq \widehat{\alpha}(I)$ by Lemma~\ref{l:obvo}, so
		\[
		\rho_{a}(I_{U}) = \rho_{a}(I(G')) =\frac{2}{\widehat{\alpha}(I(G'))} \leq \frac{2}{\widehat{\alpha}(I)},
		\]
		which was what we wanted.
	\end{proof}
	
	\begin{remark}
		It is possible to prove that the defining half-spaces for the Newton polyhedron of $I$ all arise from a skew valuation of the form $v_{U}(\prod x_{i}^{e_{i}}) = 2\sum_{x_{i}\in \link(U)}e_{i} + \sum_{x_{j}\not\in\STAR(U)}e_{j}$ for some subset $U$.  This allows us to prove Theorem \ref{t:graphs} using Corollary \ref{c:finitemax} instead of Corollary \ref{c:lowdimensionalNewtonpolytope}.
	\end{remark}
	
	In~\cite{Waldschmidt16} a nice relationship is shown between the \emph{fractional chromatic number} and the Waldschmidt constant.  We briefly recall here the definition of the fractional chromatic number of a graph $G$. If $\mathcal{I}$ is the set of independent sets of $G$ and $\mathbb{R}_{\geq 0}$ the set of nonnegative real numbers, a \emph{fractional coloring} is a function $f : \mathcal{I} \to \mathbb{R}_{\geq 0}$ satisfying, for every vertex $v$ of $G$, 
	\[
	\sum_{v \in A \in \mathcal{I}} f(A) \geq 1. 
	\]
	The fractional chromatic number $\chi_f(G)$ is then defined by
	\[
	\chi_f(G) = \inf \left\{ \sum_{A \in \mathcal{I}} f(A) : f \text{ is a fractional coloring of } G\right\}. 
	\]
	There is another (possibly more intuitive) description of the fractional chromatic number involving graph maps and the class of Kneser graphs $\text{KG}_{n,k}$. For details, see Chapter 17 in \cite{K08}. 
	
	The fractional chromatic number is bounded above by the classical chromatic number. For an example in which they are unequal, consider the $5$-cycle $C_5$. The function $f$ which takes the value of $\frac{1}{2}$ on an independent set of size $2$ and $0$ on an independent set of size $1$ is a fractional coloring. Thus, $\chi_f(C_5) \leq \frac{5}{2}$, whereas $\chi(C_5) = 3$. 
	
	If $I=I(G)$ is the edge ideal of a graph, then it is shown in~\cite[Theorem~4.6]{Waldschmidt16} that $\walpha(I)=\frac{\chi_f(G)}{\chi_f(G)-1}$ (this is shown more generally for any hypergraph but we will only need the statement for graphs).  Combining this result with Theorem~\ref{t:graphs}, we obtain:
	
\begin{corollary}\label{c:Fractional}
If $I=I(G)$ is the edge ideal of a graph, then $\rho_a(I)=\frac{2(\chi_f(G)-1)}{\chi_f(G)}.$
\end{corollary}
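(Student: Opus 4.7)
The plan is to combine the two immediately preceding ingredients into a one-line substitution. By Theorem~\ref{t:graphs}, the asymptotic resurgence of the edge ideal $I=I(G)$ of a graph is completely determined by its Waldschmidt constant via the formula $\rho_a(I)=\frac{2}{\widehat{\alpha}(I)}$. On the other hand, the cited result \cite[Theorem~4.6]{Waldschmidt16} expresses the Waldschmidt constant in terms of the fractional chromatic number as $\widehat{\alpha}(I)=\frac{\chi_f(G)}{\chi_f(G)-1}$.

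Given these two inputs, the proof is essentially just algebra. First I would recall the hypothesis that $G$ is a graph (not a general hypergraph), so that both Theorem~\ref{t:graphs} and the specialization of~\cite[Theorem~4.6]{Waldschmidt16} apply without modification. Next I would substitute the expression for $\widehat{\alpha}(I)$ into the formula from Theorem~\ref{t:graphs} to obtain
\[
\rho_a(I)=\frac{2}{\widehat{\alpha}(I)}=\frac{2}{\frac{\chi_f(G)}{\chi_f(G)-1}}=\frac{2(\chi_f(G)-1)}{\chi_f(G)},
\]
which is the desired equality.

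There is really no obstacle to overcome here — the statement is a direct corollary and not an independent result. The only minor point worth a remark is that $\chi_f(G)>1$ whenever $G$ has at least one edge (so that the denominator $\chi_f(G)-1$ appearing implicitly in the Waldschmidt formula is positive), while the degenerate case of an edgeless graph is handled trivially (and is excluded in any event since $I=I(G)$ is only interesting when $G$ has edges). Thus the proof reduces to one line of substitution.
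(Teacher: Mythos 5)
Your proposal is correct and matches the paper exactly: the corollary is obtained by substituting the identity $\widehat{\alpha}(I)=\frac{\chi_f(G)}{\chi_f(G)-1}$ from \cite[Theorem~4.6]{Waldschmidt16} into the formula $\rho_a(I)=\frac{2}{\widehat{\alpha}(I)}$ of Theorem~\ref{t:graphs}, which is precisely how the paper derives it.
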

	
	\begin{remark}
		There are many graphs for which the fractional chromatic number is known (and hence by~\cite{Waldschmidt16}, the Waldschmidt constant of the corresponding edge ideal).  These include Kneser graphs, complete $k$-partite graphs, cycles, and wheels.  See~\cite[Section~6]{Waldschmidt16} for further discussion.  By Theorem~\ref{t:graphs} or Corollary~\ref{c:Fractional}, we automatically get the asymptotic resurgence of any such ideal.
	\end{remark}
		
	It is natural to ask whether Theorem~\ref{t:graphs} and Corollary~\ref{c:Fractional} can be extended to the resurgence. Van Tuyl asked the following question in Oaxaca in May 2017, which inspired the work in this paper.
	
	\begin{question}\label{q:resurgencefrac}
	If $I=I(G)$ is the edge ideal of a graph, is it true that $\rho(I)=\frac{2(\chi_f(G)-1)}{\chi_f(G)}$?
	\end{question}
	
	From the discussion prior to Theorem~\ref{t:graphs}, we can answer Question~\ref{q:resurgencefrac} in the affirmative if $I$ is normal (equivalently, $G$ satisfies the odd cycle condition).  If $I$ is not normal our methods do not appear strong enough to prove equality of the resurgence and asymptotic resurgence.  For instance, the following proposition shows that the resurgence does coincide with the  asymptotic resurgence for the edge ideal of the disjoint union of two odd cycles.  However, we need better containment results for the ideal of each individual cycle than those afforded by Lemma~\ref{lem:rhoainequalities}; for these we rely on the preprint~\cite{GHOS18}.
	
	\begin{proposition}\label{prop:twooddcycles}
	Suppose $G$ is a simple graph obtained as a disjoint union of two odd cycles $C_1$ and $C_2$, of lengths $n$ and $m$ respectively, where $n=2k+1<2\ell+l=m$. Then $\rho(I(G))=\rho_a(I(G))=\rho(I(C_1))=\frac{n+1}{n}$.
	\end{proposition}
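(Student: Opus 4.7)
The plan is to establish the three equalities in turn. For $\rho(I(C_1))=(n+1)/n$: since a single odd cycle vacuously satisfies the odd cycle condition, $I(C_1)$ is normal by \cite{OH98,SVV98}, hence $\rho(I(C_1))=\rho_a(I(C_1))$ by Corollary~\ref{cor:rhoa=rho}, and Corollary~\ref{c:Fractional} combined with the classical identity $\chi_f(C_n)=2n/(n-1)$ for odd $n$ evaluates this to $(n+1)/n$. For $\rho_a(I(G))=(n+1)/n$: the fractional chromatic number of a disjoint union is the maximum of the components, so $\chi_f(G)=\max(\chi_f(C_1),\chi_f(C_2))=\chi_f(C_1)=2n/(n-1)$ because $\chi_f(C_n)=2n/(n-1)$ is strictly decreasing in odd $n$, and Corollary~\ref{c:Fractional} then gives $\rho_a(I(G))=(n+1)/n$.

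The heart of the proposition is the equality $\rho(I(G))=\rho_a(I(G))$. Since $\rho\ge\rho_a$ always, it suffices to prove $\rho(I(G))\le(n+1)/n$. As $C_1$ and $C_2$ live on disjoint variable sets, the powers of $I(G)=I(C_1)+I(C_2)$ decompose as
\[
I(G)^{(s)}=\sum_{a+b=s}I(C_1)^{(a)}I(C_2)^{(b)}\qquad\text{and}\qquad I(G)^{r}=\sum_{c+d=r}I(C_1)^{c}I(C_2)^{d}.
\]
Writing a monomial of $I(G)$ as $m_1m_2$ with $m_i$ supported on the variables of $C_i$, one checks that $m_1m_2\in I(G)^{(s)}$ iff $a(m_1)+b(m_2)\ge s$ (where $a(m_1)$ and $b(m_2)$ are the largest exponents putting the $m_i$ in symbolic powers of the respective $I(C_i)$) and that $m_1m_2\in I(G)^{r}$ iff $c_{\max}(m_1)+c_{\max}(m_2)\ge r$. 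Setting $f_k(j):=\max\{c:I(C_k)^{(j)}\subseteq I(C_k)^{c}\}$, monotonicity of $f_n$ and $f_m$ then reduces $I(G)^{(s)}\subseteq I(G)^{r}$ to the combinatorial requirement that $f_n(a)+f_m(b)\ge r$ for \emph{every} decomposition $a+b=s$ with $a,b\ge 0$.

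The remaining input is sharp containment for individual odd cycles, supplied by the preprint \cite{GHOS18}. Because each $I(C_k)$ is normal and the only defining half-space of $\NP(I(C_k))$ that is neither a coordinate hyperplane nor a minimal-vertex-cover hyperplane is $\sum\alpha_i\ge 2$, the containment $I(C_k)^{(j)}\subseteq I(C_k)^{c}$ reduces to the integer inequality $\alpha(I(C_k)^{(j)})\ge 2c$, and the LP lower bound $\alpha(I(C_k)^{(j)})\ge 2kj/(k+1)$ yields the uniform estimate $f_k(j)\ge kj/(k+1)-\tfrac{1}{2}$. Consequently $f_n(a)+f_m(b)\ge na/(n+1)+mb/(m+1)-1$, and since $n<m$ the linear function $na/(n+1)+mb/(m+1)$ on $\{a+b=s\}$ is minimized at $(a,b)=(s,0)$ with value $ns/(n+1)$. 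Whenever $sn>r(n+1)$ this forces $f_n(a)+f_m(b)>r-1$ for every valid decomposition, and integrality then yields $f_n(a)+f_m(b)\ge r$, giving the desired containment. The main obstacle is the clean extraction of the bound $f_k(j)\ge kj/(k+1)-\tfrac{1}{2}$ from \cite{GHOS18} (equivalently, from the integer linear program computing $\alpha(I(C_k)^{(j)})$ on the symbolic polyhedron of an odd cycle).
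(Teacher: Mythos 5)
Your treatment of $\rho_a(I(G))$ and $\rho(I(C_1))$ agrees in substance with the paper (the paper works with Waldschmidt constants of the components via \cite{Waldschmidt16}, you work with fractional chromatic numbers; these are interchangeable through Corollary~\ref{c:Fractional}, and your normality observation for a single odd cycle, giving $\rho=\rho_a$ via Corollary~\ref{cor:rhoa=rho}, is fine). For the key inequality $\rho(I(G))\le\frac{n+1}{n}$ you also use the same starting point as the paper, namely the decomposition $(I_1+I_2)^{(s)}=\sum_{a+b=s}I_1^{(a)}I_2^{(b)}$ from \cite[Theorem~7.8]{Waldschmidt16} (which in this squarefree, disjoint-variables setting you essentially reprove at the level of monomials). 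Where you genuinely diverge is the single-cycle input and the bookkeeping: the paper quotes the sharp criterion of \cite{GHOS18} for an odd cycle of length $N=2K+1$, namely $J^{(j)}\subseteq J^{c}$ iff $c<j-\lfloor\frac{j-(K+1)}{N+1}\rfloor$, fixes a split $u+v=s$ and finishes with a ``straightforward but tedious'' floor computation; you instead package the single-cycle information into $f_k(j)=\max\{c: I(C_k)^{(j)}\subseteq I(C_k)^c\}$, use the uniform estimate $f_k(j)\ge\frac{kj}{k+1}-\frac12$, and minimize $\frac{na}{n+1}+\frac{mb}{m+1}$ over $a+b=s$, closing with integrality. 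That final minimization-plus-integrality step is correct and is cleaner than the paper's arithmetic.

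The one gap is the assertion you lean on to get the estimate: that for an odd cycle the only defining half-space of $\NP(I(C))$ besides coordinate and minimal-vertex-cover half-spaces is $\sum e_i\ge 2$, so that (by normality) $I(C)^{(j)}\subseteq I(C)^c$ is equivalent to $\alpha(I(C)^{(j)})\ge 2c$. This is true but not proved in your sketch; it requires an argument, e.g.\ via half-integrality of the fractional vertex cover polytope of a graph: for an odd cycle its only vertices are the indicator vectors of vertex covers and the all-$\frac12$ point, and LP duality then shows a monomial lies in $\overline{I^c}$ exactly when all cover sums are $\ge c$ and its degree is $\ge 2c$. Alternatively you can bypass the facet analysis entirely, since the \cite{GHOS18} criterion you already cite yields your estimate immediately: $f(j)=j-\lfloor\frac{j-(K+1)}{N+1}\rfloor-1\ge\frac{Nj-(K+1)}{N+1}=\frac{Nj}{N+1}-\frac12$. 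With either repair your argument is complete and constitutes a legitimate alternative to the paper's finish.
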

	\begin{proof}
		By Corollary~\ref{c:lowdimensionalNewtonpolytope}, $\rho_a(I)=\alpha_a(I)=\frac{2}{\walpha(I)}$.  By~\cite[Corollary~4.7]{Waldschmidt16}, $\walpha(I)=\min\{\walpha(I_1),\walpha(I_2)\}$.  By~\cite[Theorem~6.7]{Waldschmidt16}, $\walpha(I_1)=\frac{n}{k+1}$ and $\walpha(I_2)=\frac{m}{\ell+1}$, hence $\rho_a(I)=\frac{2(k+1)}{n}=\frac{n+1}{n}$.
		
		Since $\rho_a(I)\le \rho(I)$, we need to prove that $\rho(I)\le \frac{n+1}{n}$.  It suffices to show that if $\frac{s}{r}>\frac{n+1}{n}$, then $I^{(s)}\subset I^r$.  Set $I_1=I(C_1)$ and $I_2=I(C_2)$.  We will use the following two results:
		\begin{enumerate}
			\item $(I_1+I_2)^{(s)}=\sum_{j=0}^s I_1^{(s-j)}I_2^{(j)}$ (\cite[Theorem~7.8]{Waldschmidt16})
			\item If $J=I(C)$ is the edge ideal of a cycle of length $n=2k+1$, $J^{(s)}\subset J^{r}$ if and only if $r<s-\lfloor\frac{s-(k+1)}{n+1}\rfloor$ (this can be deduced from~\cite[Theorem~3.4]{GHOS18})
		\end{enumerate}
		From (1) it suffices to show that $I_1^{(u)}I_2^{(v)}\subset I^r$ when $u+v=s$ and $\frac{s}{r}>\frac{n+1}{n}$.  If $r<u-\lfloor\frac{u-(k+1)}{n+1}\rfloor$, then by (2) $I_1^{(u)}\subset I_1^r$ hence $I_1^{(u)}I_2^{(v)}\subset I^r$.  So we assume $r\ge u-\lfloor\frac{u-(k+1)}{n+1}\rfloor$.  Put $h=u-\lfloor\frac{u-(k+1)}{n+1}\rfloor-1$.  Then $r=h+c$, where $c>0$.  By (2), $I_1^{(u)}\subset I^h$.  It suffices to show that $I_2^{(v)}\subset I^c$, since then $I_1^{(u)}I_2^{(v)}\subset I^hI^c=I^r$.  A straightforward but tedious computation starting with the inequality $\frac{s}{r}=\frac{u+v}{h+c}>\frac{n+1}{n}$ yields that $c<v-\lfloor\frac{v-(\ell+1)}{m+1}\rfloor$ (remember $n\le m=2\ell+1$); by (2) this proves that $I_2^{(v)}\subset I^c$.
	\end{proof}
	
	We proved in Section \ref{s:squarefree} that the resurgence and asymptotic resurgence of square-free ideals are bounded below in terms of the degrees of generators.  We close the current section with a (generally very coarse) upper bound, also in terms of the degrees of generators. 
	
	\begin{theorem}\label{t:upperbound}
		Let $I$ be a squarefree ideal.  Then $\rho(I)\leq\omega(I)$, where $\omega(I)$ is the maximum degree of a minimal generator of $I$.
	\end{theorem}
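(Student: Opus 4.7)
The plan is to prove, by induction on $r \ge 0$, the containment $I^{(s)} \subseteq I^r$ under the hypothesis $s > r \cdot \omega(I)$; this immediately yields $\rho(I) \le \omega(I)$ from the definition of the resurgence. The base case $r = 0$ is trivial since $I^0 = S$.

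For the inductive step, I would fix a monomial $m \in I^{(s)}$ with $s > r\omega(I)$ and peel off a single minimal generator $g$ of $I$ that divides $m$. The existence of such a $g$ is the one place the squarefree hypothesis enters: because $I$ is radical, Theorem~\ref{t:whysquarefree} gives $I^{(s)} \subseteq I^{(1)} = I$ whenever $s \ge 1$, so $m$ itself lies in $I$ and is divisible by some minimal generator. Once $g$ is fixed, the remaining work is essentially automatic. For every associated prime $P$ of $I$, the skew valuation $v_P = \sum_{x_i \in P} e_i$ from Lemma~\ref{l:hyperplanes} satisfies
\[
v_P(m/g) \;=\; v_P(m) - v_P(g) \;\ge\; s - \deg(g) \;\ge\; s - \omega(I),
\]
so $m/g \in \bigcap_{P \in \Ass(I)} P^{\,s - \omega(I)} = I^{(s - \omega(I))}$. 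Since $s - \omega(I) > (r-1)\omega(I)$, the inductive hypothesis produces $m/g \in I^{r-1}$, and therefore $m = g \cdot (m/g) \in I^r$.

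There is no real obstacle in this argument. The crucial ingredient beyond the containment $I^{(s)} \subseteq I$ is simply that every minimal generator of a squarefree monomial ideal has degree at most $\omega(I)$, which forces $v_P(g) \le \omega(I)$ uniformly in $P$, thereby dropping the ``symbolic level'' by at most $\omega(I)$ when one peels $g$ off. For general monomial ideals the step $I^{(s)} \subseteq I$ may fail in the presence of embedded primes, so a substitute argument would be needed to extract a first generator; in the squarefree case treated here this complication does not arise.
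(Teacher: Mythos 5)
Your proof is correct. The one point to note is that your inductive statement uses the strict threshold $s>r\omega(I)$ rather than the paper's $s\ge r\omega(I)$, but this is immaterial: if $I^{(s)}\not\subseteq I^r$ your claim forces $\tfrac{s}{r}\le\omega(I)$, so $\rho(I)\le\omega(I)$ follows just as well. Your route is genuinely different in mechanism from the paper's. The paper argues in one shot: it factors a monomial $m\in I^{(s)}$ as $m=m'\beta$ with $m'$ a product of $k$ minimal generators and $\beta\notin I$, observes that the complement of $\supp(\beta)$ is a vertex cover (hence contains an associated prime $P$), and uses $v_P(m)\ge s$ together with $\deg m'\le \omega k$ to force $k\ge \tfrac{s}{\omega}\ge r$; the leftover factor $\beta$ and a single well-chosen prime carry the whole argument. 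You instead peel one generator at a time: radicality gives $I^{(s)}\subseteq I$ (your only use of Theorem~\ref{t:whysquarefree} beyond the primary decomposition), and the uniform bound $v_P(g)\le\deg g\le\omega(I)$ over \emph{all} associated primes shows the symbolic order drops by at most $\omega(I)$ at each step, so induction on $r$ finishes. In effect you prove the cleaner intermediate statement $I^{(s)}\subseteq I\cdot I^{(s-\omega(I))}$ for $s>\omega(I)$ and iterate, which is arguably more transparent and modular; the paper's argument avoids induction and the bookkeeping over all primes by exploiting the vertex-cover structure of the non-$I$ remainder. Both uses of squarefreeness are essential, and both yield the same bound.
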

	\begin{proof}
		It suffices to show that $I^{(s)}\subseteq I^{r}$ whenever $r\leq \frac{s}{\omega}$.  To this end, fix any such $r$ and $s$, and suppose $m\in I^{(s)}$.  Write $m=m'\beta$ with $\beta\not\in I$ and $m'\in I^{k}\smallsetminus I^{k+1}$.  We will show $k\geq r$.
		
		Let $\mathcal{H}$ be the hypergraph whose edges are the generators of $I$.
		Since $\beta\not\in I$, the support of $\beta$ cannot contain any edge of $\mathcal{H}$, so $V=\{x_{1},\dots, x_{n}\}\smallsetminus \supp(\beta)$ must be a vertex cover of $\mathcal{H}$.  Writing $m'=\mathbf{x}^{\mathbf{v}}=\prod x_{i}^{e_{i}}$, we conclude $\sum_{x_{i}\in V}e_{i}\geq s$.  This yields the inequality
		\[
		s\leq \sum_{x_{i}\in V}e_{i}\leq \deg m' \leq \omega k,
		\]
		where the final inequality follows since $m'\in I^k$. So $k\geq \frac{s}{\omega}\geq r$ as desired.
	\end{proof}
	
	We learned in personal communication that H\`a and Trung have recently independently proven a result stronger than Theorem~\ref{t:upperbound}, recovering the theorem as a corollary.
	
	\begin{remark}
	Suppose $I=I(G)$ is the edge ideal of a graph.  Using Corollary~\ref{c:Fractional} and Theorem~\ref{t:upperbound}, we have the following bounds on $\rho(I)$:
	\[
	\frac{2(\chi_f(G)-1)}{\chi_f(G)}\le \rho(I)\le 2,
	\]
	with equality on the left if $I$ is normal.
	\end{remark}

\section{Asymptotic resurgence and integral closure}\label{s:AsymptoticResurgence}
In this section we prove our main results, showing that the asymptotic resurgence of an ideal in the polynomial ring may be computed using integral closures.  From this we will derive the fact that asymptotic resurgence is the maximum of finitely many Waldschmidt-like constants; as in the monomial case we call these \textit{skew Waldschmidt constants}.  

We will assume throughout that $I$ is a homogeneous ideal in the polynomial ring $S=\kk[x_1,\ldots,x_n]$.  We begin with a lemma which provides upper bounds on $\rho_a(I)$.

\begin{lemma}\label{lem:asymptoticupperbound}
Suppose $\{s_i\}$ and $\{r_i\}$ are sequences of positive integers such that $\lim s_i=\lim r_i=\infty$, $I^{(s_i)}\subseteq I^{r_i}$ for all $i$, and
\[
\lim \frac{s_i}{r_i}=h
\]
for some $h\in\R$.  Then $\rho_a(I)\le h$.
\end{lemma}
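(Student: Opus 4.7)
The plan is to prove the lemma by contrapositive-style reasoning on the definition
\[
\rho_a(I)=\sup\left\{\tfrac{s}{r}:I^{(st)}\not\subset I^{rt}\text{ for all }t\gg 0\right\}.
\]
Concretely, I will show that whenever a positive rational $s/r$ satisfies $s/r>h$, it cannot belong to the set on the right: there must be \emph{arbitrarily large} integers $t$ for which $I^{(st)}\subset I^{rt}$. Since every element of the defining set is then at most $h$, the supremum $\rho_a(I)$ is at most $h$.

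The core step is a "sandwiching" argument using monotonicity of symbolic and ordinary powers: $I^{(a)}\subset I^{(b)}$ whenever $a\geq b$, and $I^c\subset I^d$ whenever $c\geq d$. So if I can find, for a given large $t$, an index $i$ with $s_i\le st$ and $r_i\ge rt$, then
\[
I^{(st)}\subset I^{(s_i)}\subset I^{r_i}\subset I^{rt},
\]
using only the hypothesis $I^{(s_i)}\subset I^{r_i}$. Thus the problem reduces to a purely numerical one: produce infinitely many pairs $(t,i)$ with $st\ge s_i$ and $rt\le r_i$, i.e.\ integers $t$ in the interval $[s_i/s,\,r_i/r]$.

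For this, fix $s/r>h$ and note that because $s_i/r_i\to h$, for all sufficiently large $i$ we have $rs_i<sr_i$, so the interval $[s_i/s,\,r_i/r]$ is nonempty. Moreover its length $(sr_i-rs_i)/(rs)$ tends to infinity as $i\to\infty$ (since $r_i\to\infty$ and the fraction $rs_i/(sr_i)\to rh/s<1$), so the interval contains an integer $t_i$ for all large $i$, and these $t_i$ themselves tend to infinity. Each such $t_i$ is a value of $t$ for which the containment $I^{(st)}\subset I^{rt}$ holds by the sandwich above.

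No step seems to be a genuine obstacle; the only thing to be careful about is the bookkeeping that the integers $t_i$ really do form an unbounded sequence (rather than, say, repeatedly producing the same $t$), so that the failure condition "$I^{(st)}\not\subset I^{rt}$ for all $t\gg 0$" is genuinely violated. Once that is verified, the conclusion $\rho_a(I)\le h$ is immediate from the definition of $\rho_a$ as a supremum.
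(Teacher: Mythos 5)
Your proposal is correct and follows essentially the same route as the paper: the same sandwich $I^{(st)}\subseteq I^{(s_i)}\subseteq I^{r_i}\subseteq I^{rt}$, obtained by locating an integer $t$ in the interval $[s_i/s,\,r_i/r]$, whose length $(sr_i-rs_i)/(rs)$ diverges because $s_i/r_i\to h<s/r$ and $r_i\to\infty$ (and whose left endpoint $s_i/s\to\infty$ makes the chosen $t$'s unbounded). The paper merely packages the same construction as a proof by contradiction with a fixed threshold $t_0$, whereas you argue directly that no ratio $s/r>h$ can lie in the set defining $\rho_a(I)$; the substance is identical.
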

\begin{proof}
We proceed by contradiction.  Suppose $\rho_a(I)>h$. Then there exists a rational number $\frac{s}{r}$, $h<\frac{s}{r}<\rho_{a}$, such that $I^{(st)}\not\subset I^{rt}$ for all $t\gg 0$.

Now, for all $i$ large enough, $\frac{s_i}{r_i}<\frac{s}{r}$, so $sr_i-rs_i>0$.  Also, we claim 
$\lim sr_i-rs_i=\infty$:  
if not we would have $\frac{s}{r}-h=\lim \frac{s}{r}-\frac{s_i}{r_i}=\lim \frac{sr_i-rs_i}{rr_i}=0$ (since the denominator goes to infinity).  It would follow that $\frac{s}{r}=h$, contradicting the construction of $\frac{s}{r}>h$.  

Let $t_{0}$ be such that for all $t\ge t_0$, we have $I^{(st)}\not\subset I^{rt}$.  Choose $i$ with $r_i\ge rt_0$ and $sr_i-rs_i>rs$.  Now let $t$ be maximal such that $r_i\ge rt$, and observe $t\ge t_0$.  By the choice of $t$ we have $r_{i}<r(t+1)$, so $sr_{i}<srt+sr$.  By the choice of $i$ we have $sr_{i}>rs_{i}+rs$.  Combining these inequalities yields $rs_{i}<rst$, so $s_{i}<st$.
We conclude
\[
I^{(st)}\subseteq I^{(s_i)}\subseteq I^{r_i} \subseteq I^{rt}
\]
(the first containment by the inequality derived above, the second by the assumptions on $\{s_{i}\}$ and $\{r_{i}\}$, and the third by the choice of $t$).  In particular, $I^{(st)}\subseteq I^{rt}$, so (by construction of $t_{0}$) we must have $t<t_{0}$, a contradiction.  
\end{proof}

If $I\subset S$ is an ideal we denote by $\overline{I}$ the \textit{integral closure} of $I$, which is the set of all elements $r\in S$ which satisfy an \textit{equation of integral dependence} over $I$.  That is
\[
r^n+a_1r^{n-1}+a_2r^{n-2}+\cdots +a_n=0,
\]
where $n$ is a positive integer and $a_i\in I^{i}$ for $i=1,\ldots,n$.  Our reference for this topic is the book of Swanson and Huneke~\cite{IntegralClosure}.  We introduce two statistics related to resurgence and integral closure as follows:
\[
\overline{\rho}(I):=\sup\left\lbrace\frac{s}{r}: I^{(s)}\not\subset \overline{I^r}\right\rbrace \quad\mbox{and}\quad \overline{\rho}_a(I):=\sup\left\lbrace\frac{s}{r}: I^{(st)}\not\subset \overline{I^{rt}} \mbox{ for all } t\gg 0\right\rbrace.
\]
A primary result of this section is that both of these statistics are equal to the asymptotic resurgence. Clearly $\overline{\rho}_a(I) \le \rho_a(I)$ and $\overline{\rho}(I)\le \rho(I)$, with equality if all powers of $I$ are integrally closed (equivalently, if the Rees algebra of $I$ is integrally closed).

\begin{proposition}\label{prop:rhoa=rhoabar}
Let $I$ be an ideal.  Then $\rho_a(I)=\overline{\rho}_a(I)$.
\end{proposition}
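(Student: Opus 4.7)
The containment $I^{rt} \subseteq \overline{I^{rt}}$ makes one direction immediate: every pair $(s,r)$ contributing to $\overline{\rho}_a(I)$ also contributes to $\rho_a(I)$, so $\overline{\rho}_a(I) \le \rho_a(I)$. All the work lies in the reverse inequality.

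The key ingredient for the reverse inequality is the standard fact that the integral closure $\overline{R[It]}$ of the Rees algebra is a finitely generated module over $R[It]$ (since $S$ is a finitely generated algebra over a field, hence excellent). Choosing module generators concentrated in degrees at most some constant $c$, one obtains $\overline{I^r} \subseteq I^{r-c}$ for all $r \ge c$. This is the quantitative form of Swanson's theorem that we need.

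The plan is to argue by contradiction: suppose $\overline{\rho}_a(I) < \rho_a(I)$, and pick any rational $\frac{s}{r}$ strictly between them. Because $\frac{s}{r} > \overline{\rho}_a(I)$, the pair $(s,r)$ is \emph{not} in the supremum set defining $\overline{\rho}_a(I)$, which means the statement ``$I^{(st)} \not\subset \overline{I^{rt}}$ for all $t \gg 0$'' fails. Hence there is an infinite sequence of exponents $t_1 < t_2 < \cdots$ for which $I^{(st_i)} \subseteq \overline{I^{rt_i}}$.

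Combining this with the Swanson-type inclusion from the previous paragraph, we have $I^{(st_i)} \subseteq I^{rt_i - c}$ for all large $i$. Setting $s_i = st_i$ and $r_i = rt_i - c$, both sequences tend to infinity and
\[
\lim_{i \to \infty} \frac{s_i}{r_i} = \lim_{i \to \infty} \frac{s t_i}{rt_i - c} = \frac{s}{r}.
\]
Lemma~\ref{lem:asymptoticupperbound} then yields $\rho_a(I) \le \frac{s}{r}$, contradicting our choice $\frac{s}{r} < \rho_a(I)$. This contradiction establishes $\overline{\rho}_a(I) \ge \rho_a(I)$, and completes the proof.

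The main obstacle, such as it is, is identifying the precise form of Swanson's theorem to invoke; once the uniform bound $\overline{I^r} \subseteq I^{r-c}$ is in hand, Lemma~\ref{lem:asymptoticupperbound} does the rest with a single limit computation. No delicate choice of $t$ or extra bookkeeping beyond the straightforward sequence $(s t_i,\, r t_i - c)$ is required.
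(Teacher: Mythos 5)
Your proposal is correct and follows essentially the same route as the paper: the inclusion $\overline{I^{n}}\subseteq I^{n-c}$ coming from finite generation of $\overline{R[It]}$ over $R[It]$, applied along an infinite sequence of $t$ with $I^{(st)}\subseteq\overline{I^{rt}}$, followed by Lemma~\ref{lem:asymptoticupperbound} on the sequences $(st_i,\,rt_i-c)$. The only difference is cosmetic: you phrase it as a contradiction, while the paper directly shows $\rho_a(I)\le \frac{s}{r}$ for every rational $\frac{s}{r}>\overline{\rho}_a(I)$.
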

\begin{proof}
It is clear that $\overline{\rho}_a(I)\le \rho_a(I)$, so it suffices to show $\rho_a(I)\le \overline{\rho}_a(I)$.  To this end, 
choose a rational $h=\frac{s}{r}>\overline{\rho}_{a}(I)$.  We will show that $\rho_{a}(I)\leq h$.  

By construction, $I^{(st)}\subset \overline{I^{rt}}$ for infinitely many $t$.
By \cite[Proposition 5.3.4]{IntegralClosure} the integral closure of the Rees algebra of $I$ is finitely generated over the Rees algebra of $I$, so there is some integer $k$ such that $\overline{I^n}=I^{n-k}\overline{I^k}\subset I^{n-k}$ for all $n\ge k$ (see~\cite[Proposition~5.3.4]{IntegralClosure}).

Now let $\{t_{i}\}$ be an increasing sequence satisfying $I^{(st_{i})}\subset \overline{I^{rt_{i}}}$ for all $t_{i}$ (which must exist by the construction of $h=\frac{s}{r}$).  We have $I^{(st_{i})}\subset \overline{I^{rt_{i}}}\subset I^{rt_{i}-k}$ for all $i$, so the sequences $\{s_{i}=st_{i}\}$ and $\{r_{i}=rt_{i}-k\}$ satisfy the hypotheses of Lemma~\ref{lem:asymptoticupperbound}.  We conclude $\rho_{a}(I)\leq \frac{s}{r}=h$.
\end{proof}

\begin{remark}\label{rem:BrianconSkoda}
  Using the Brian\c{c}on-Skoda Theorem (see~\cite[Theorem~13.3.3]{IntegralClosure} and the following remarks), we can choose the integer $k$ independently of the ideal $I$ in the proof of Proposition~\ref{prop:rhoa=rhoabar}.  More precisely, we can always take $k=n-1$, regardless of the ideal $I$.
\end{remark}

Recall that a \textit{discrete valuation} on a field $\mathbf{K}$ is a homomorphism $v:\mathbf{K}^*=\mathbf{K}\smallsetminus\{0\}\to\ZZ$ from the multiplicative group $\mathbf{K}^*$ to the additive group $\ZZ$ satisfying that for all $x,y\in\mathbf{K}$, $v(x+y)\ge\min\{v(x),v(y)\}$.  If $S$ is the polynomial ring, we will take $\mathbf{K}$ to be the fraction field of $S$.  In this case, a valuation on $\mathbf{K}$ is determined uniquely by its values on $S$, so we will abuse notation by referring to these as valuations on $S$ rather than $\mathbf{K}$.  Given a valuation $v$, set $\mathbf{K}_v=\{x\in\mathbf{K}:v(x)\ge 0\}$.  Then $V=\mathbf{K}_v$ is a discrete valuation ring (DVR) with field of fractions $\mathbf{K}$ and we denote its unique maximal ideal by $\m_V$.  If $V\subset \mathbf{K}$ is a DVR with maximal ideal $\m_{V}$, we may define a valuation $v_{V}$ by the rule $v_{V}(x)=\max(k:x\in \m_{V}^{k})$.  Every valuation with valuation ring $V$ is then a scalar multiple of $v_{V}$, so there is a one-to-one correspondence between valuations on $\mathbf{K}$ (up to this equivalence) and DVRs whose field of fractions is $K$.  For more details, see \cite[Chapter 6]{IntegralClosure}.

Given a valuation $v$ and an ideal $I$, we write $v(I)$ for the minimum value that $v$ takes on $I$; i.e., $v(I)=\min\{v(f):f\in I\}$.  Valuations are  relevant for our analysis because of the valuative criterion for integral closure (see \cite[Theorem 6.8.3]{IntegralClosure}):

\begin{theorem}[Valuative Criterion for Integral Closure]\label{t:VCfIC}
  Fix an ideal $I$, and $x\in S$.  Then $x\in \overline{I}$ if and only if $x\in IV$ for every DVR $V$ containing $S$ with field of fractions $\mathbf{K}$.    Equivalently, $x\in \overline{I}$ if and only if $v(x)\geq v(I)$ for every discrete valuation $v$.  Furthermore, if $J$ is another ideal, then $J\subset \overline{I}$ if and only if $v(J)<v(I)$ for all $v$.  
  \end{theorem}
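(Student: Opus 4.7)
The plan is to split the biconditional into its two directions, after observing that the two formulations (``$x\in IV$ for every DVR $V$'' and ``$v(x)\ge v(I)$ for every discrete valuation $v$'') are the same statement. Indeed, in a DVR $V\subseteq \mathbf{K}$ containing $S$, with maximal ideal $\m_V$ and associated valuation $v$, the ideal $IV$ equals $\m_V^{v(I)}$, so $x\in IV$ if and only if $v(x)\geq v(I)$.

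The forward direction is a direct arithmetic consequence of the integral dependence equation. Given $x\in\overline{I}$, fix $x^n+a_1x^{n-1}+\cdots+a_n=0$ with $a_i\in I^i$ and let $v$ be any discrete valuation. If $v(x)<v(I)$ then, for each $i\geq 1$,
\[
v(a_ix^{n-i})\geq iv(I)+(n-i)v(x)>nv(x)=v(x^n),
\]
so the non-archimedean property forces $v(x^n+a_1x^{n-1}+\cdots+a_n)=nv(x)$, which is finite and therefore cannot equal $v(0)$, a contradiction.

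The backward direction is the main obstacle. Arguing contrapositively, suppose $x\not\in\overline{I}$; the goal is to exhibit a DVR-valued $v$ on $\mathbf{K}$ with $v(x)<v(I)$. The natural vehicle is the extended Rees algebra $R=S[It,t^{-1}]\subseteq \mathbf{K}(t)$: a degree-by-degree comparison of integral dependence equations shows that $xt\in\overline{R}$ if and only if $x\in\overline{I}$. Since $\overline{R}$ is an integrally closed Noetherian domain (it is finite over $R$ because $S$ is a Nagata ring), it equals the intersection of its DVR overrings in $\mathbf{K}(t)$, so there exists a DVR $W$ with $R\subseteq W\subset \mathbf{K}(t)$ and associated valuation $w$ such that $w(xt)<0$. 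The inclusions $t^{-1},It\subseteq R\subseteq W$ then give $w(t)\leq 0$ and $w(y)\geq -w(t)$ for every $y\in I$, while $w(x)<-w(t)$. Restricting $w$ to $\mathbf{K}^*$ yields a (nontrivial, since otherwise $v(x)=v(I)=0$) discrete valuation $v$ on $\mathbf{K}$ with $v(x)<v(I)$, as required.

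The ``furthermore'' statement is immediate: applying the element-wise result to each $f\in J$ gives $J\subseteq \overline{I}$ if and only if $v(f)\geq v(I)$ for every $f\in J$ and every $v$, which is exactly $v(J)\geq v(I)$ for every $v$.
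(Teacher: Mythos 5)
The paper does not actually prove this statement: it is imported verbatim (up to a misprint, see below) from \cite[Theorem~6.8.3]{IntegralClosure}, so there is no internal argument to measure you against, and the relevant question is whether your proof stands on its own. It does. Your forward direction is the standard ultrametric estimate on an equation of integral dependence and is correct; note that it proves $v(x)\ge v(I)$ for \emph{every} discrete valuation of $\mathbf{K}$, which matters because your opening identification $IV=\m_V^{v(I)}$ only makes sense for valuation rings $V\supseteq S$, so the two formulations are not literally ``the same statement''---but since your forward direction handles arbitrary $v$ and your backward direction produces a $V$ containing $S$, both formulations are covered and nothing is lost. Your backward direction is also sound: the degree-by-degree comparison in the $\ZZ$-graded ring $S[t,t^{-1}]$ does show $xt$ is integral over $R=S[It,t^{-1}]$ exactly when $x\in\overline{I}$; $\overline{R}$ is module-finite over $R$ because $R$ is a finitely generated $\kk$-algebra, hence Noetherian and normal, hence the intersection of the DVRs at its height-one primes; and the bookkeeping with $w(t)\le 0$, $w(y)\ge -w(t)$ for $y\in I$, and $w(x)<-w(t)$, together with your nontriviality remark, correctly yields a discrete valuation of $\mathbf{K}$ nonnegative on $S$ with $v(x)<v(I)$. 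This route differs from the proof in \cite{IntegralClosure}, which avoids Noetherian and finiteness hypotheses by working with rings of the form $R[I/x]$ and valuations dominating suitable local rings; your argument trades that generality for Nagata/Krull-domain machinery, which is harmless in the polynomial-ring setting of this paper (and is close in spirit to how Rees valuations are constructed in Remark~\ref{r:ReesValuations}). One caveat: the ``furthermore'' clause as printed in the statement, $v(J)<v(I)$ for all $v$, is a misprint for $v(J)\ge v(I)$ for all $v$; you silently proved the corrected version, which is the right reading, but the discrepancy deserves an explicit flag rather than a silent fix.
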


We now discuss the sequence $\{v(I^{(n)})\}$.  Since $I^{(m)}I^{(n)}\subset I^{(m+n)}$, and $v$ is a valuation, it follows that $v$ is \textit{subadditive}.  (That is, $v(I^{(m+n)})\le v(I^{(m)})+v(I^{(n)})$ for all $m,n$.)  Thus $\displaystyle{\lim_{n\to\infty} \frac{v(I^{(n)})}{n}=\inf\left\{\frac{v(I^{(n)})}{n}\right\}}$. (This is sometimes called \textit{Fekete's lemma} and holds for any subadditive sequence.)  Another important consequence of subadditivity is that $\frac{v(I^{(n)})}{n}\le \frac{v(I^{(m)})}{m}$ whenever $m$ divides $n$.  We record these facts in the following lemma (see~\cite[Lemma~2.3.1]{BH10} for a proof).

\begin{lemma}\label{lem:generalizedWaldschmidt}
	Let $v:S\to\ZZ$ be a discrete valuation and $I\subset S$ an ideal.  Then the limit
	\[
	\widehat{v}(I):=\lim_{s\to\infty} \dfrac{v(I^{(s)})}{s}
	\]
	exists, and $\widehat{v}(I)=\inf\limits_s\{\frac{v(I^{(s)})}{s}\}$.  Moreover, $\frac{v(I^{(n)})}{n}\le \frac{v(I^{(m)})}{m}$ if $m$ divides $n$.
\end{lemma}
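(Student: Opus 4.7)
The plan is to establish the lemma as a direct consequence of Fekete's subadditivity lemma, applied to the sequence $a_s := v(I^{(s)})$. The only nontrivial input is the containment $I^{(m)} I^{(n)} \subseteq I^{(m+n)}$, which is a standard property of symbolic powers.

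First I would verify subadditivity of $\{v(I^{(s)})\}$. The containment $I^{(m)} I^{(n)} \subseteq I^{(m+n)}$ follows from the definition $I^{(s)} = \bigcap_{P\in\Ass(I)} (I^s S_P \cap S)$: if $f \in I^{(m)}$ and $g \in I^{(n)}$, then for each $P \in \Ass(I)$ we have $f \in I^m S_P$ and $g \in I^n S_P$, so $fg \in I^{m+n} S_P$, whence $fg \in I^{(m+n)}$ after intersecting with $S$. Given any $f \in I^{(m)}$ and $g \in I^{(n)}$ with $v(f) = v(I^{(m)})$ and $v(g) = v(I^{(n)})$, the fact that $v$ is a valuation gives $v(fg) = v(f) + v(g)$, and since $fg \in I^{(m+n)}$, we get $v(I^{(m+n)}) \le v(I^{(m)}) + v(I^{(n)})$.

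Next I would invoke Fekete's lemma for the subadditive sequence $a_s = v(I^{(s)})$: the limit $\lim_{s\to\infty} a_s/s$ exists in $\R \cup \{-\infty\}$ and equals $\inf_s a_s/s$. Since $v$ takes nonnegative values on $S$ (the valuation ring contains $S$), the infimum is bounded below by $0$, so the limit is finite. This gives both the existence of $\widehat{v}(I)$ and the identification with the infimum.

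Finally, the monotonicity statement for divisibility is a quick iteration of subadditivity: if $n = km$, then applying subadditivity $k-1$ times gives $a_n = a_{km} \le k a_m$, so $a_n/n = a_{km}/(km) \le a_m/m$. The main obstacle, if any, is simply being careful about the containment of symbolic powers; everything else is a standard appeal to Fekete's lemma, and the authors indeed cite \cite[Lemma~2.3.1]{BH10} for exactly this argument.
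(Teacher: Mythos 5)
Your proof is correct and follows exactly the route the paper takes: subadditivity of $\{v(I^{(s)})\}$ via $I^{(m)}I^{(n)}\subseteq I^{(m+n)}$ and multiplicativity of $v$, then Fekete's lemma for the limit-equals-infimum statement (with the citation to \cite[Lemma~2.3.1]{BH10}) and iterated subadditivity for the divisibility inequality. Your explicit remark that nonnegativity of $v$ on $S$ keeps the infimum finite is a minor point the paper leaves implicit, but it is consistent with the valuations actually used there.
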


\begin{remark}
The map $\alpha:S\to \ZZ$ defined on homogeneous polynomials by $\alpha(f)=\deg(f)$ extends to a valuation on $S$.  In this case, $\walpha(I)$ is the Waldschmidt constant, and Lemma~\ref{lem:generalizedWaldschmidt} is the first part of ~\cite[Lemma~2.3.1]{BH10}. 
\end{remark}

\begin{definition}
Given a valuation $v$ and an ideal $I$, we say $v$ \emph{is supported on $I$} if $v(I)\ge 1$.
\end{definition}

\begin{lemma}\label{lem:nonzeroskewwaldschmidt}
A valuation $v$ is supported on $I$ if and only if $\widehat{v}(I)>0$.
\end{lemma}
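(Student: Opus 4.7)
The plan is to prove both implications separately. The forward direction will rely on Swanson's theorem (as the paper hints in Notation 2.6), while the converse follows almost immediately from the definitions together with the fact that every valuation on the polynomial ring considered here is nonnegative on $S$ (i.e., $S$ sits inside the valuation ring $V$).

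For the forward direction, suppose $v(I)\geq 1$. By Swanson's theorem there exists an integer $h$ such that $I^{(rh)}\subseteq I^{r}$ for every $r\geq 1$. Since $v$ is a valuation and every element of $I^{r}$ is a sum of $r$-fold products of elements of $I$, the standard sub-multiplicativity computation gives $v(I^{r})\geq r\cdot v(I)\geq r$. Combining these,
\[
v(I^{(rh)})\;\geq\; v(I^{r})\;\geq\; r,\qquad\text{so}\qquad \frac{v(I^{(rh)})}{rh}\;\geq\;\frac{1}{h}
\]
for every $r$. By Lemma~\ref{lem:generalizedWaldschmidt} the sequence $\bigl\{\tfrac{v(I^{(n)})}{n}\bigr\}$ converges to $\widehat{v}(I)$, so the subsequence indexed by multiples of $h$ also converges to $\widehat{v}(I)$, yielding $\widehat{v}(I)\geq \tfrac{1}{h}>0$.

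For the converse I will argue by contrapositive. Note first that because the valuations considered correspond to DVRs $V$ containing $S$, every $f\in S$ satisfies $v(f)\geq 0$, and hence $v(I^{(s)})\geq 0$ for all $s$. Now suppose $v$ is not supported on $I$, i.e., $v(I)=0$. Then there exists $f\in I$ with $v(f)=0$. For every $s\geq 1$ we have $f^{s}\in I^{s}\subseteq I^{(s)}$ and $v(f^{s})=s\cdot v(f)=0$, so
\[
0\;\leq\; v(I^{(s)})\;\leq\; v(f^{s})\;=\;0.
\]
Thus $v(I^{(s)})=0$ for every $s$, and consequently $\widehat{v}(I)=0$.

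The only real ingredient is Swanson's theorem; everything else reduces to sub-multiplicativity of valuations, nonnegativity of $v$ on $S$, and the inf-formula from Lemma~\ref{lem:generalizedWaldschmidt}. There is no substantial obstacle, which is consistent with the fact that the paper labels this statement a straightforward lemma.
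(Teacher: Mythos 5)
Your proof is correct and follows essentially the same route as the paper: the substantive direction is the identical application of Swanson's theorem together with $v(I^{(rh)})\ge v(I^r)\ge rv(I)$ and the limit/inf characterization from Lemma~\ref{lem:generalizedWaldschmidt}. Your converse, done by contrapositive with an element $f\in I$ of minimal value, is just a rephrasing of the paper's one-line observation that $\widehat{v}(I)\le v(I)$ forces the integer $v(I)$ to be at least $1$ when $\widehat{v}(I)>0$, so there is nothing substantively different to compare.
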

\begin{proof}
  Suppose $\widehat{v}(I)>0$.  Then $v(I)\geq \widehat{v}(I)$ is an integer and in particular must be at least $1$.  

   Conversely, suppose $v(I)\ge 1$.  By~\cite{S00}, there is some $h$ so that $I^{(sh)}\subset I^s$ for all $s$.  It follows that
\[
\frac{v(I^{(sh)})}{sh}\ge \frac{v(I^s)}{sh} =\frac{sv(I)}{sh}\ge \frac{1}{h}>0,
\]
hence $\widehat{v}(I)>0$.
\end{proof}

\begin{remark}\label{r:ReesValuations}
  Every non-zero ideal in $S$ has a (unique) set of DVRs $V_1,\ldots,V_r$ (called \textit{Rees valuation rings}) so that
\begin{enumerate}
	\item $V_i\subset\mathbf{K}$, where $\mathbf{K}$ is the fraction field of $S$,
	\item for all $n\in\N$, $\overline{I^n}=\cap_{i=1}^r I^nV_i$, and
	\item the set $V_1,\ldots,V_r$ satisfying (2) is minimal possible.
\end{enumerate}
See~\cite[Chapter~10]{IntegralClosure} for details of the construction.  The corresponding valuations $v_1,\ldots,v_r$ are called \textit{Rees valuations} (these are unique up to equivalence for valuations; see~\cite[Definition~6.1.8]{IntegralClosure}).  Thus to check that $x\in \overline{I^n}$ using the valuative criterion for integral closure, it suffices to check that $v(x)\ge nv(I)$ for the finitely many Rees valuations of $I$.
\end{remark}

\begin{theorem}\label{thm:generalizedWBound}
Let $I$ be an ideal and let $v_1,\ldots,v_r$ be the set of Rees valuations for $I$.  Then
\[
\rho_a(I)=\max\limits_i\left\lbrace\frac{v_i(I)}{\widehat{v}_i(I)}\right\rbrace=\sup\limits_v\left\lbrace\frac{v(I)}{\widehat{v}(I)}\right\rbrace,
\]
where the maximum and supremum are taken over discrete valuations which are supported on $I$.
\end{theorem}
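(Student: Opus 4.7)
The plan is to establish the chain
\[
\rho_{a}(I) \;\le\; \max_i \frac{v_i(I)}{\widehat{v}_i(I)} \;\le\; \sup_v \frac{v(I)}{\widehat{v}(I)} \;\le\; \rho_{a}(I),
\]
which then collapses to equality throughout. The middle inequality is immediate, because each Rees valuation is itself a discrete valuation supported on $I$ (by Remark~\ref{r:ReesValuations}, $v_i(I)\ge 1$), so the $v_i$ appear among the $v$ on the right.

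For the upper bound $\rho_a(I)\le \max_i v_i(I)/\widehat{v}_i(I)$, set $h=\max_i v_i(I)/\widehat{v}_i(I)$ and use Proposition~\ref{prop:rhoa=rhoabar} to replace $\rho_a(I)$ by $\overline{\rho}_a(I)$. Fix any rational $s/r>h$; the goal is to show $I^{(st)}\subseteq \overline{I^{rt}}$ for every $t\ge 1$, which forces $s/r$ out of the defining set and hence $\overline{\rho}_a(I)\le s/r$. By Remark~\ref{r:ReesValuations} together with the valuative criterion (Theorem~\ref{t:VCfIC}), the containment reduces to verifying $v_i(I^{(st)})\ge rt\, v_i(I)$ for each Rees valuation $v_i$. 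Lemma~\ref{lem:generalizedWaldschmidt} gives $v_i(I^{(st)})\ge st\,\widehat{v}_i(I)$, and the choice $s/r\ge v_i(I)/\widehat{v}_i(I)$ gives $st\,\widehat{v}_i(I)\ge rt\, v_i(I)$; concatenating yields the required bound.

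For the lower bound $\sup_v v(I)/\widehat{v}(I)\le \rho_a(I)$, fix a valuation $v$ supported on $I$ and suppose for contradiction that $v(I)/\widehat{v}(I)>\rho_a(I)=\overline{\rho}_a(I)$. Pick a rational $s/r$ strictly between. Since $s/r>\overline{\rho}_a(I)$, the definition of $\overline{\rho}_a$ implies that $I^{(st)}\not\subseteq \overline{I^{rt}}$ fails to hold for all $t\gg 0$; equivalently, $I^{(st)}\subseteq \overline{I^{rt}}$ for infinitely many $t$. For each such $t$, the valuative criterion (Theorem~\ref{t:VCfIC}) gives $v(I^{(st)})\ge rt\, v(I)$, so $v(I^{(st)})/(st)\ge r\,v(I)/s$. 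Passing to the limit along this subsequence (which converges to $\widehat{v}(I)$ by Lemma~\ref{lem:generalizedWaldschmidt}) produces $\widehat{v}(I)\ge r\,v(I)/s$, i.e.\ $s/r\ge v(I)/\widehat{v}(I)$, contradicting the choice of $s/r$.

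The chief subtlety lies in the upper bound, where one must bridge a strict real inequality $s/r>h$ and the integer-valued function $v_i$; fortunately the sandwich $v_i(I^{(st)})\ge st\,\widehat{v}_i(I)\ge rt\, v_i(I)$ remains valid even when $\widehat{v}_i(I)$ is irrational. Conceptually, the whole argument hinges on the reduction to finitely many Rees valuations from Remark~\ref{r:ReesValuations}, which converts the \emph{a priori} infinite collection of valuative constraints required to test membership in an integral closure into a finite, checkable system.
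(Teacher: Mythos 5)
Your proposal is correct and follows essentially the same route as the paper: reduce to $\overline{\rho}_a(I)$ via Proposition~\ref{prop:rhoa=rhoabar}, use the valuative criterion together with the finitely many Rees valuations for the upper bound, and use Lemma~\ref{lem:generalizedWaldschmidt} (the limit equals the infimum) for both directions. The only difference is cosmetic -- you argue the upper bound directly and the lower bound by contradiction, whereas the paper does the reverse -- and your one loose phrase (``hence $\overline{\rho}_a(I)\le s/r$'' from excluding a single ratio) is harmless since your argument applies uniformly to every rational above the maximum.
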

\begin{proof}
Write $M$ for $\displaystyle{\max_i\left\{\frac{v_i(I)}{\widehat{v}_i(I)}\right\}}$.  We first show that $\rho_a(I)\le M$.  Suppose to the contrary that $\rho_a(I)>M$.  By Proposition~\ref{prop:rhoa=rhoabar}, there exist $r$ and $s$ such that $M<\frac{s}{r}<\rho_{a}(I)$ and $I^{(s)}\not\subset \overline{I^{r}}$.  By the valuative criterion for integral closure and the properties of Rees valuations, there exists a Rees valuation $v_{i}$ such that $v_{i}(I^{(s)})<v_{i}(I^{r})=rv_{i}(I)$.  Now,
\begin{align*}
  M &\geq \frac{v_{i}(I)}{\widehat{v_{i}}(I)} &\text{(by assumption)}\\
  &\geq \frac{v_{i}(I)}{\frac{v_{i}(I^{(s)})}{s}} &\text{(by Lemma~\ref{lem:generalizedWaldschmidt})}\\
  &> \frac{v_{i}(I)}{\frac{rv_{i}(I)}{s}} &\text{(by the discussion above)}\\
  &=\frac{s}{r},
\end{align*}
a contradiction.

To show $M\leq \rho_{a}(I)$, suppose $v$ is any valuation supported on $I$.  We will show $\frac{v(I)}{\widehat{v}(I)}\leq \rho_{a}(I)$.  Suppose $r$ and $s$ are such that $\frac{s}{r}<\frac{v(I)}{\widehat{v}(I)}$.  Then there exists $t_{0}$ such that $\frac{s}{r}<\frac{v(I)}{\frac{v(I^{(st)})}{st}}$ for all $t\geq t_{0}$.  Hence $v(I^{(st)})< rtv(I)=v(I^{rt})$ for all $t\ge t_0$.  By the valuative criterion for integral closure, $I^{(st)}\not\subset \overline{I^{rt}}$ for $t\ge t_0$.  Hence $\frac{s}{r} \le \overline{\rho}_a(I)$, so by Proposition~\ref{prop:rhoa=rhoabar} $\frac{s}{r}\le \rho_a(I)$.  Since $\frac{s}{r}$ could be arbitrarily close to $\frac{v(I)}{\widehat{v}(I)}$, we conclude that $\frac{v(I)}{\widehat{v}(I)}\leq \rho_{a}(I)$.

We have shown $M\le\sup\left\{\frac{v(I)}{\widehat{v}(I)}\right\}\le \rho_a(I)\le M$.  The desired equalities are immediate.
\end{proof}

\begin{remark}
Theorem~\ref{thm:generalizedWBound} generalizes the well-known bound $\frac{\alpha(I)}{\walpha(I)}\le \rho(I)$.
\end{remark}

\begin{lemma}\label{lem:rhoainequalities}
	If $I$ is an ideal, then
	\begin{enumerate}
		\item if $I^{(s)}\not\subseteq \overline{I^r}$ then $\frac{s}{r}<\rho_a(I)$
		\item if $\frac{s}{r}<\rho_a(I)$ then $I^{(st)}\not\subseteq \overline{I^{rt}}$ for all $t\gg 0$.
	\end{enumerate}
\end{lemma}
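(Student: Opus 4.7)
The strategy is to translate non-containment statements of the form $I^{(a)} \not\subseteq \overline{I^{b}}$ into the existence of a Rees valuation $v$ with $v(I^{(a)}) < b\,v(I)$, via the valuative criterion (Theorem~\ref{t:VCfIC}) and the characterization of integral closure using the finite set of Rees valuations (Remark~\ref{r:ReesValuations}). Combined with Theorem~\ref{thm:generalizedWBound}, which identifies $\rho_a(I)$ with $\max_i v_i(I)/\widehat{v_i}(I)$ over Rees valuations, this will give both implications almost mechanically.

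For (1), I would start by assuming $I^{(s)} \not\subseteq \overline{I^r}$. The valuative criterion (applied to Rees valuations) then supplies a Rees valuation $v$ of $I$ satisfying $v(I^{(s)}) < r\,v(I)$. By Lemma~\ref{lem:generalizedWaldschmidt}, $s\,\widehat{v}(I) \le v(I^{(s)})$, so chaining the inequalities gives $s\,\widehat{v}(I) < r\,v(I)$. Since $v$ is a Rees valuation of $I$ it is supported on $I$, so $\widehat{v}(I) > 0$ by Lemma~\ref{lem:nonzeroskewwaldschmidt} and we may divide to obtain $s/r < v(I)/\widehat{v}(I)$. Theorem~\ref{thm:generalizedWBound} then yields $s/r < \rho_a(I)$, and the inequality is strict because the strict inequality $v(I^{(s)}) < r\,v(I)$ survives the estimate $s\,\widehat{v}(I) \le v(I^{(s)})$.

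For (2), I would reverse the roles: given $s/r < \rho_a(I)$, Theorem~\ref{thm:generalizedWBound} produces a Rees valuation $v$ with $s/r < v(I)/\widehat{v}(I)$, equivalently $\widehat{v}(I) < r\,v(I)/s$. By Lemma~\ref{lem:generalizedWaldschmidt}, $v(I^{(st)})/(st) \to \widehat{v}(I)$ as $t \to \infty$, so the strict gap $r\,v(I)/s - \widehat{v}(I) > 0$ allows me to find a threshold $t_0$ such that $v(I^{(st)})/(st) < r\,v(I)/s$ for all $t \ge t_0$. Rearranging, $v(I^{(st)}) < rt\,v(I) = v(I^{rt})$, and the valuative criterion (again applied through the Rees valuations of $I$) yields $I^{(st)} \not\subseteq \overline{I^{rt}}$ for all $t \gg 0$.

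The main obstacle is the strictness in part (2): one has to exploit the \emph{limit} (and not merely infimum) nature of $\widehat{v}(I)$ from Lemma~\ref{lem:generalizedWaldschmidt} to convert a strict inequality on $\widehat{v}(I)$ into a uniform strict inequality on $v(I^{(st)})/(st)$ for all large $t$. Everything else is routine bookkeeping built on top of the valuative criterion and Theorem~\ref{thm:generalizedWBound}.
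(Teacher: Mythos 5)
Your proposal is correct and follows essentially the same route as the paper: non-containment is converted to a valuation inequality $v(I^{(a)})<b\,v(I)$ via the valuative criterion (with Rees valuations), and both implications are then finished by combining Lemma~\ref{lem:generalizedWaldschmidt} with Theorem~\ref{thm:generalizedWBound}, including the same use of the limit (not just infimum) characterization of $\widehat{v}(I)$ to get the uniform strict inequality for $t\gg 0$ in part (2). The only cosmetic difference is that you invoke Rees valuations and Lemma~\ref{lem:nonzeroskewwaldschmidt} explicitly where the paper simply takes an arbitrary discrete valuation furnished by the valuative criterion.
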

\begin{remark}
Given an ideal $I$ and a fraction $\frac{s}{r}<\rho_a(I)$, we may have to take $t$ to be quite large before $I^{(st)}\not\subseteq \overline{I^{rt}}$.  See Example~\ref{ex:cremona}.
\end{remark}
\begin{proof}
	If $I^{(s)}\not\subseteq \overline{I^r}$ then by the valuative criterion for integral closure there is a valuation $v:S\to\ZZ$ so that $v(I^{(s)})<v(I^r)$.  So we have
	\[
	\frac{s}{r}<\frac{s}{r}\frac{v(I^{r})}{v(I^{(s)})} = \frac{s}{r}\frac{rv(I)}{v(I^{(s)})}=\frac{v(I)}{\frac{v(I^{(s)})}{s}}\le \frac{v(I)}{\widehat{v}(I)}\le\rho_a(I)
	\]
	by Lemma~\ref{lem:generalizedWaldschmidt} and Theorem~\ref{thm:generalizedWBound}.  This proves (1).
	
	For (2), if $\frac{s}{r}<\rho_a(I)$ then by Theorem~\ref{thm:generalizedWBound} there is a valuation $v:S\to\ZZ$ so that $\frac{s}{r}<\frac{v(I)}{\widehat{v}(I)}$.  By Lemma~\ref{lem:generalizedWaldschmidt},
	\[
	\frac{s}{r}<\frac{v(I)}{\frac{v(I^{(st)})}{st}}
	\]
	for all $t\gg 0$.  Rearranging, $v(I^{(st)})<rtv(I)=v(I^{rt})$ for all $t\gg 0$, so the valuative criterion for integral closure tells us that $I^{(st)}\not\subseteq \overline{I^{rt}}$ for all $t\gg 0$.
\end{proof}

\begin{corollary}\label{cor:rhoabar=rhobar}\label{cor:rhoa=rho}
For any ideal $I$, we have $\rho_{a}(I)=\overline{\rho}_{a}(I)=\overline{\rho}(I)$.  In particular, if $I$ is normal (that is, if all powers of $I$ are integrally closed), then $\rho_{a}(I)=\rho(I)$.
\end{corollary}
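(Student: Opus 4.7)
The plan is to assemble the corollary from three pieces already in place: Proposition~\ref{prop:rhoa=rhoabar} (which handles $\rho_a(I)=\overline{\rho}_a(I)$), Lemma~\ref{lem:rhoainequalities}(1) (which lets us pass from non-containment in an integral closure to a ratio bounded by $\rho_a(I)$), and the definitions of $\overline{\rho}(I)$ and $\overline{\rho}_a(I)$. So only the identification $\overline{\rho}_a(I)=\overline{\rho}(I)$ really needs argument, and the normality statement will follow from the trivial observation that if every $I^r$ is integrally closed then $\overline{\rho}(I)=\rho(I)$.

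First I would record the easy direction: since requiring $I^{(st)}\not\subset \overline{I^{rt}}$ for all $t\gg 0$ is a stronger condition than the single instance $I^{(s)}\not\subset \overline{I^r}$ (take $t=1$), the supremum defining $\overline{\rho}_a(I)$ is taken over a smaller set than the one defining $\overline{\rho}(I)$, so $\overline{\rho}_a(I)\le \overline{\rho}(I)$.

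For the reverse inequality, I would argue $\overline{\rho}(I)\le \rho_a(I)$. Let $s/r$ be any ratio with $I^{(s)}\not\subset \overline{I^{r}}$; by Lemma~\ref{lem:rhoainequalities}(1) we have $s/r<\rho_a(I)$, so taking the supremum over all such ratios yields $\overline{\rho}(I)\le \rho_a(I)$. Combining with Proposition~\ref{prop:rhoa=rhoabar} gives the chain
\[
\overline{\rho}_a(I)\le \overline{\rho}(I)\le \rho_a(I)=\overline{\rho}_a(I),
\]
and so all three quantities coincide, establishing the main equality.

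Finally, for the normality statement, note that if $I$ is normal then $\overline{I^r}=I^r$ for every $r$, hence the non-containment conditions $I^{(s)}\not\subset \overline{I^r}$ and $I^{(s)}\not\subset I^r$ define the same set of ratios, so $\overline{\rho}(I)=\rho(I)$. Combining with $\rho_a(I)=\overline{\rho}(I)$ gives $\rho_a(I)=\rho(I)$. There is no real obstacle here: the substantive work lives in Proposition~\ref{prop:rhoa=rhoabar} and in the valuative argument underlying Lemma~\ref{lem:rhoainequalities}, and the corollary is a bookkeeping exercise that repackages those facts.
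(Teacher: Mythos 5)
Your proposal is correct and follows essentially the same route as the paper: the paper's proof likewise notes that $\overline{\rho}_a(I)\le\overline{\rho}(I)$ is automatic from the definitions and then derives everything else from Lemma~\ref{lem:rhoainequalities}(1) together with Proposition~\ref{prop:rhoa=rhoabar}, exactly the chain $\overline{\rho}_a(I)\le\overline{\rho}(I)\le\rho_a(I)=\overline{\rho}_a(I)$ you assemble, with normality handled by $\overline{I^r}=I^r$. One small slip worth fixing: the condition ``$I^{(st)}\not\subset\overline{I^{rt}}$ for all $t\gg 0$'' does not literally include $t=1$, but choosing any single sufficiently large $t$ exhibits the same ratio $st/rt=s/r$ in the set defining $\overline{\rho}(I)$, so the easy inequality stands.
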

\begin{proof}
The inequality $\overline{\rho}_a(I)\le\overline{\rho}(I)$ is automatic from the definitions. 
The rest follows from part (1) of Lemma~\ref{lem:rhoainequalities} and  Proposition~\ref{prop:rhoa=rhoabar}.
\end{proof}

\begin{remark}\label{rem:smooth}
  Corollary~\ref{cor:rhoa=rho} gives a partial answer to the question raised at the end of~\cite{GHV13}: in what cases do we have $\rho_a(I)=\rho(I)$?  The normalilty hypothesis here is very strong and far from sharp
  -- it happens that $\rho_a(I)=\rho(I)$ for many ideals which are not normal.
\end{remark}

\begin{corollary}\label{cor:rhoa1}
Let $I$ be an ideal.  Then $\rho_a(I)\ge 1$, with equality if and only if $I^{(s)}\subseteq\overline{I^s}$ for every $s\ge 1$.
\end{corollary}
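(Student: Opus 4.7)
The plan is to derive the lower bound $\rho_a(I) \ge 1$ from Theorem~\ref{thm:generalizedWBound}, then use Lemma~\ref{lem:rhoainequalities} and Corollary~\ref{cor:rhoa=rho} to handle the equality characterization.

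For the lower bound, I would first verify that every valuation $v$ supported on $I$ satisfies $\widehat{v}(I) \le v(I)$. The key point is that $v$ is multiplicative on products, so $v(I^s) = sv(I)$, and combining this with $I^s \subseteq I^{(s)}$ gives $v(I^{(s)}) \le v(I^s) = sv(I)$. Dividing by $s$ and passing to the limit yields $\widehat{v}(I) \le v(I)$. Applying Theorem~\ref{thm:generalizedWBound} to any Rees valuation of $I$ (which exists since $I$ is nonzero, and is automatically supported on $I$) then gives $\rho_a(I) \ge v(I)/\widehat{v}(I) \ge 1$.

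For the biconditional, the $(\Leftarrow)$ direction is a one-line observation: if $I^{(s)} \subseteq \overline{I^s}$ for all $s$, then monotonicity of integral closure gives $I^{(s)} \subseteq \overline{I^s} \subseteq \overline{I^r}$ whenever $s \ge r$, so only pairs with $s/r < 1$ can fail containment. Thus $\overline{\rho}(I) \le 1$, and $\rho_a(I) \le 1$ by Corollary~\ref{cor:rhoa=rho}. Combined with the lower bound, $\rho_a(I) = 1$. For $(\Rightarrow)$, if some $s_0$ violated $I^{(s_0)} \subseteq \overline{I^{s_0}}$, then Lemma~\ref{lem:rhoainequalities}(1) applied with $r=s=s_0$ would force $1 = s_0/s_0 < \rho_a(I)$, contradicting $\rho_a(I) = 1$.

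There is no real obstacle here; all the heavy lifting was done earlier in the section. The only ingredient this corollary adds on its own is the elementary inequality $\widehat{v}(I) \le v(I)$ for valuations supported on $I$, which falls out immediately from $I^s \subseteq I^{(s)}$ and the multiplicativity of $v$.
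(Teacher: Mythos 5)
Your proof is correct, and it differs from the paper's in one meaningful way: for the lower bound $\rho_a(I)\ge 1$ the paper simply cites \cite[Theorem~1.1]{GHV13}, whereas you derive it internally from Theorem~\ref{thm:generalizedWBound} via the elementary observation that $I^s\subseteq I^{(s)}$ forces $v(I^{(s)})\le v(I^s)\le sv(I)$, hence $\widehat{v}(I)\le v(I)$ for any valuation supported on $I$ (in particular for a Rees valuation, which is indeed supported on $I$). This makes the corollary self-contained within Section~\ref{s:AsymptoticResurgence}, at the cost of invoking the main theorem where a citation suffices; note you only need the inequality $v(I^s)\le sv(I)$, not the full equality you assert via multiplicativity, though that equality does hold for valuations. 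For the equality characterization, your $(\Rightarrow)$ direction is exactly the paper's argument (Lemma~\ref{lem:rhoainequalities}(1) with $r=s=s_0$), while your $(\Leftarrow)$ direction — $I^{(s)}\subseteq\overline{I^s}\subseteq\overline{I^r}$ for $s\ge r$, so $\overline{\rho}(I)\le 1$ and then $\rho_a(I)=\overline{\rho}(I)$ by Corollary~\ref{cor:rhoa=rho} — is a direction the paper's proof leaves implicit, so spelling it out is a small completeness gain on your side. No gaps, and no circularity, since everything you invoke precedes the corollary.
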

\begin{proof}
For the inequality $1\le \rho_a(I)$, see~\cite[Theorem~1.1]{GHV13}.  Now suppose there is some $s\ge 1$ so that $I^{(s)}\not\subset\overline{I^s}$.  Then by Lemma~\ref{lem:rhoainequalities}, $1<\rho_a(I)$.  Hence if $\rho_a(I)=1$, we must have $I^{(s)}\subseteq \overline{I^s}$ for every $s\ge 1$.
\end{proof}

\begin{corollary}\label{cor:rho=rhoa=1}
Suppose that the symbolic powers of $I$ are integrally closed (for example, if $I$ is radical).  Then $\rho(I)=1$ if and only if $\rho_a(I)=1$ and $\overline{I^{r+1}}\subset I^r$ for all $r\ge 1$.
\end{corollary}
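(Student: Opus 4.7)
The strategy is to use the hypothesis on symbolic powers as a bridge between containments involving $\overline{I^r}$ and those involving $I^r$. Since $I^s \subseteq I^{(s)}$ always holds and $I^{(s)}$ is integrally closed by assumption, taking integral closures gives the key observation $\overline{I^s} \subseteq I^{(s)}$ for every $s \geq 1$.

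For the forward implication, assume $\rho(I)=1$. From the general inequality $\rho_a(I) \leq \rho(I)$ combined with Corollary~\ref{cor:rhoa1} we get $1 \leq \rho_a(I) \leq 1$, so $\rho_a(I)=1$. The second conclusion follows from the chain $\overline{I^{r+1}} \subseteq I^{(r+1)} \subseteq I^r$, where the first inclusion is the key observation above and the second is forced by $\rho(I)=1$ together with $\tfrac{r+1}{r}>1$.

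For the backward implication, assume $\rho_a(I)=1$ and $\overline{I^{r+1}} \subseteq I^r$ for every $r \geq 1$. To conclude $\rho(I) \leq 1$ it suffices to show $I^{(s)} \subseteq I^r$ whenever $s \geq r+1$. I would chain
\[
I^{(s)} \;\subseteq\; I^{(r+1)} \;\subseteq\; \overline{I^{r+1}} \;\subseteq\; I^r,
\]
where the first inclusion reflects the monotonicity $I^{(s)} \subseteq I^{(t)}$ for $s \geq t$, the second follows from Corollary~\ref{cor:rhoa1} applied to $\rho_a(I)=1$, and the third is the standing hypothesis. Combined with the inequality $\rho(I) \geq \rho_a(I) = 1$, this gives $\rho(I)=1$.

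I do not anticipate a substantive obstacle: the argument is essentially bookkeeping that combines Corollary~\ref{cor:rhoa1}, the monotonicity of symbolic powers, and the observation that integral closedness of $I^{(s)}$ allows one to replace $\overline{I^{r}}$ by $I^{(r)}$. The one point to articulate carefully is that $\rho(I)=1$, being a supremum, really does force $I^{(r+1)} \subseteq I^r$ for every $r \geq 1$ (since otherwise $\tfrac{r+1}{r}>1$ would exceed $\rho(I)$); this is the only place where the precise definition of $\rho$, rather than the weaker asymptotic version, is used.
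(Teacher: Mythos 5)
Your proof is correct and follows essentially the same route as the paper's: both directions hinge on using the integral closedness of symbolic powers (via Corollary~\ref{cor:rhoa1}) to pass between $I^{(s)}$ and $\overline{I^s}$, with the forward direction extracting $\overline{I^{r+1}}\subseteq I^{(r+1)}\subseteq I^r$ from $\rho(I)=1$ and the backward direction chaining monotonicity with the hypothesis $\overline{I^{r+1}}\subseteq I^r$. The only cosmetic difference is that the paper works with the equality $I^{(s)}=\overline{I^s}$ while you use the one-sided inclusions where needed; the substance is the same.
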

\begin{proof}
Suppose first that $\rho(I)=1$.  It follows from $1\le \rho_a(I)\le \rho(I)$ that $\rho_a(I)=1$.  Since the symbolic powers of $I$ are integrally closed, Corollary~\ref{cor:rhoa1} yields $I^{(r)}\subseteq \overline{I^{r}}\subseteq \overline{I^{(r)}}=I^{(r)}$, so $I^{(r)}=\overline{I^r}$ for all $r\ge 1$.  Suppose that $\overline{I^{r+1}}=I^{(r+1)}\not\subset I^r$ for some $r\ge 1$.  Then $\rho(I)\ge \frac{r+1}{r}>1$, a contradiction.  Hence we must have $\overline{I^{r+1}}\subset I^r$ for $r\ge 1$. 

Now suppose that $\rho_a(I)=1$ and $\overline{I^{r+1}}\subset I^r$ for all $r\ge 1$.  As above, we have $I^{(s)}=\overline{I^s}$ for all $s\ge 1$. If $\rho(I)>1$, there would exist positive integers $s>r$ so that $I^{(s)}=\overline{I^s}\not\subset I^r$.  Since $\overline{I^s}\subseteq\overline{I^{r+1}}$, it would follow that $\overline{I^{r+1}}\not\subset I^r$, a contradiction.  So $\rho(I)\le 1$.  Since we always have $1\le \rho(I)$, this shows $\rho(I)=1$.
\end{proof}

We now discuss some upper bounds on resurgence in terms of asymptotic resurgence.

\begin{definition}
For a fixed positive integer $r$, write $K_r=K_r(I):=\min\{s:\overline{I^s}\subset I^r\}$.  By the Brian\c{c}on-Skoda theorem (see~\cite[Theorem~13.3.3]{IntegralClosure} and following remarks), $K_r\le r+(n-1)$, where $n$ is the number of variables of $S$. Hence
\[
1\le \frac{K_r}{r}\le 1+\frac{(n-1)}{r},
\]
and $\max_r\left\{\frac{K_r}{r}\right\}$ exists.  Write $K(I)$ for this maximum value.  The inequalities above show $K(I)\le n$.
\end{definition}

\begin{proposition}\label{prop:rhoupperbound}
For any ideal $I$,
\[
\rho(I)\le \max_r\left\lbrace\frac{\lceil \rho_a(I)K_r(I)\rceil-1}{r}\right\rbrace\le \rho_a(I)K(I)\le \rho_a(I)n.
\]
\end{proposition}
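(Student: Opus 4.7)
The plan is to prove the three inequalities in sequence, with only the first requiring real work. The key observation is that $K_r$ provides a bridge between ordinary and integral powers: by definition $\overline{I^{K_r}} \subseteq I^r$, so any failure of the containment $I^{(s)} \subseteq I^r$ is automatically a failure of $I^{(s)} \subseteq \overline{I^{K_r}}$, and the latter can be controlled using Lemma~\ref{lem:rhoainequalities}(1).

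For the first inequality, I would fix a pair $(r,s)$ with $I^{(s)} \not\subseteq I^r$ and aim to bound $s/r$. Combining $\overline{I^{K_r}} \subseteq I^r$ with the hypothesis gives $I^{(s)} \not\subseteq \overline{I^{K_r}}$, and Lemma~\ref{lem:rhoainequalities}(1) then yields $s/K_r < \rho_a(I)$, that is, $s < \rho_a(I)\,K_r$. Since $s$ is an integer, this strict inequality upgrades to $s \le \lceil \rho_a(I)\,K_r \rceil - 1$ (checking separately the case when $\rho_a(I)\,K_r$ is an integer and when it is not). Dividing by $r$ and taking supremum over all pairs $(r,s)$ with $I^{(s)} \not\subseteq I^r$ gives exactly $\rho(I) \le \max_r\{(\lceil \rho_a(I)\,K_r\rceil - 1)/r\}$.

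The remaining two inequalities are purely arithmetic. Using $\lceil x \rceil - 1 \le x$ for every real $x$, we have $(\lceil \rho_a(I)\,K_r\rceil - 1)/r \le \rho_a(I)\,(K_r/r) \le \rho_a(I)\,K(I)$ for each $r$, and taking the maximum over $r$ gives the middle inequality. The final inequality $K(I) \le n$ is recorded in the definition of $K(I)$ and follows directly from the Brian\c{c}on--Skoda bound $K_r \le r + (n-1)$, so $K_r/r \le 1 + (n-1)/r \le n$.

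I do not anticipate a serious obstacle: the argument is essentially a packaging of Lemma~\ref{lem:rhoainequalities}(1) together with the defining property of $K_r$. The one subtle point is checking that the right-hand maximum is genuinely attained rather than merely a supremum, but this is immediate from the facts that $K_r/r$ is bounded above by $1 + (n-1)/r$ and converges to $1$, so the relevant candidates for the maximum are concentrated at finitely many small values of $r$.
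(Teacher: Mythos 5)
Your proposal is correct and follows essentially the same argument as the paper: use $\overline{I^{K_r}}\subseteq I^r$ to convert a failed containment $I^{(s)}\not\subseteq I^r$ into $I^{(s)}\not\subseteq\overline{I^{K_r}}$, apply Lemma~\ref{lem:rhoainequalities}(1) to get $s\le\lceil\rho_a(I)K_r\rceil-1$, then finish with the elementary bounds $\lceil x\rceil-1\le x$ and $K_r/r\le K(I)\le n$. No substantive differences from the paper's proof.
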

\begin{proof}
Suppose $I^{(s)}\not\subset I^r$.  Then $I^{(s)}\not\subset \overline{I^{K_r}}$, so by Lemma~\ref{lem:rhoainequalities}, $\frac{s}{K_r}<\rho_a(I)$, hence $s\le\lceil \rho_a(I) K_r\rceil-1$.  It follows that
$\frac{s}{r}\le \frac{\lceil \rho_a(I) K_r\rceil-1}{r}$, hence
\[
\rho(I)=\sup\left\lbrace\frac{s}{r}: I^{(s)}\not\subset I^r\right\rbrace\le \sup \left\lbrace\frac{\lceil \rho_a(I) K_r\rceil-1}{r}\right\rbrace.
\]
The result now follows from the inequalities
\[
\frac{\lceil \rho_{a}(I)K_{r}\rceil - 1}{r} \leq \frac{\rho_{a}(I)K_{r}}{r}\leq \rho_{a}(I)K(I).
\]
\end{proof}

\begin{remark}
The results of~\cite{ELS01} and~\cite{HH02} imply that $\rho(I)\le n-1$, rendering the final inequality in~\ref{prop:rhoupperbound} useless; however, either of the two expressions prior to the final inequality can represent non-trivial improvements, depending on the ideal.
\end{remark}


\newcommand{\etalchar}[1]{$^{#1}$}

\end{document}